\newtheorem{theorem}{Satz}
\newtheorem{corollary}[theorem]{Folgerung}
\newtheorem{remark}[theorem]{Bemerkung}
\newtheorem{lemma}[theorem]{Lemma}
\newcommand{\ay}{{\mathfrak{a}}}
\newcommand{\py}{{\mathfrak{p}}}
\newcommand{\my}{{\mathfrak{m}}}
\newcommand{\qy}{{\mathfrak{q}}}
\newcommand{\by}{{\mathfrak{b}}}
\newcommand{\Gy}{{\mathfrak{G}}}
\newenvironment{proof}{{\it Beweis}. $\;\;$}{\hspace*{\fill} $\Box$}
\begin{document}

\title{\Large\bf "Uber die maximalen Ideale des Quotientenringes $R_{\Gy}$}

\author{Helmut Z"oschinger\\ 
Mathematisches Institut der Universit"at M"unchen\\
Theresienstr. 39, D-80333 M"unchen, Germany\\
E-mail: zoeschinger@mathematik.uni-muenchen.de}

\date{}
\maketitle

\vspace{1cm}

\begin{center}  
{\bf Abstract}
\end{center}

Let $R$ be a commutative Noetherian local ring, $\Gy$ a Gabriel topology on $R$, and $\Gy^\ast$ the set of all maximal elements of Spec($R)\backslash \Gy$.
We determine all simple $\Gy$-torsion free $R$-modules $M$, as well as all simple $\Gy$-divisible
Artinian $R$-modules $N$. A central role is played by the set $\Gy^\ast$: Its elements correspond exactly to the
maximal ideals of the quotient ring $R_\Gy$, if $\Gy$ is perfect.

\vspace{0.5cm}

{\noindent{\it Key Words:} Gabriel topologies, $\Gy$-torsion free and $\Gy$-divisible modules, strong prime and strong coprime modules, Matlis duality.}

\vspace{0.5cm}

{\noindent{\it Mathematics Subject Classification (2020):} 13A05, 13B30, 13C05, 13D30.}


\section{Die Menge $\Gy^\ast$ aller maximalen Elemente\\ von Spec$(R)\backslash \Gy$}
 
Stets sei $R$ ein kommutativer, noetherscher, lokaler Ring und $\Gy$ eine Gabriel-Topologie auf $R$.
Ein $R$-Modul $M$ hei"st bekanntlich $\Gy$-torsionsfrei, wenn Ass$(M) \cap \Gy =\emptyset$ ist, {\bf einfach-$\Gy$-torsionsfrei}, wenn $M$ $\Gy$-torsionsfrei $\neq 0$ ist
und aus $0\neq U \subsetneq M$ stets folgt, da"s $M/U$ nicht $\Gy$-torsionsfrei ist.
Dual hei"st $N\;\Gy$-teilbar, wenn Koass$(N) \cap \Gy =\emptyset$ ist, {\bf einfach-$\Gy$-teilbar}, wenn $N \;\Gy$-teilbar $\neq 0$ ist und aus $0 \neq V \subsetneq N$ stets folgt, da"s $V$ nicht $\Gy$-teilbar ist.
Im Spezialfall $\Gy_1=\{\ay \subset R |\ay$ enth"alt einen NNT von $R\}$ ist der letzte Begriff schon
ausf"uhrlich untersucht worden, von Matlis in \cite{4} im Fall dim($R)\leq 1$, von Facchini in \cite{2} f"ur beliebige Integrit"atsringe, 
endlich f"ur beide Begriffe in \cite{12}.

\newpage

{\bf Beispiele}

\begin{enumerate}
\item[(1)]
Ist $\Gy_1=\{\ay \subset R|\; \ay $ regul"ar$\}$ wie eben, so ist $\Gy_1^\ast$ die Menge aller maximalen Elemente von Ass$(R)$.

\item[(2)]
Ist $\qy$ ein Primideal von $R$ und $\Gy=\{\ay \subset R|\; \ay \nsubseteq \qy\}$ so ist $\Gy^\ast = \{\qy\}$.

\item[(3)]
Ist $n\geq 0$ und $\Gy = \{R\}\; \cup\; \{\ay \subsetneq R| \dim (R/\ay) \leq n\}$, so ist $\Gy^\ast = \{\py \in $ Spec$(R)| \dim(R/\py)=n+1\}$.
\end{enumerate}

\begin{lemma}

F"ur jeden einfach-$\Gy$-torsionsfreien $R$-Modul $M$ gilt:
\begin{enumerate}
\item[(a)]
$M $ ist prim.
\item[(b)]
$M$ ist uniform.
\item[(c)]
Ann$_R(M) \in \Gy^\ast$.
\end{enumerate}
\end{lemma}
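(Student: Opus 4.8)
The plan is to first upgrade the defining property into the stronger statement that $M$ is \emph{cocritical}: for every submodule $0 \neq U \subsetneq M$ the quotient $M/U$ is not merely non-$\Gy$-torsionsfrei but entirely $\Gy$-torsion. To see this, given such a $U$ let $U'/U = t_{\Gy}(M/U)$ be the $\Gy$-torsion submodule of $M/U$. By hypothesis $M/U$ is not $\Gy$-torsionsfrei, so $U' \supsetneq U$, whence $U' \neq 0$, while $M/U' \cong (M/U)/t_{\Gy}(M/U)$ is $\Gy$-torsionsfrei by construction. Were $U'$ proper, the defining property applied to $U'$ would force $M/U'$ to \emph{not} be $\Gy$-torsionsfrei, a contradiction; hence $U' = M$, i.e. $M/U$ is $\Gy$-torsion. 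Statement (b) is then immediate: if $U_1, U_2$ were nonzero submodules with $U_1 \cap U_2 = 0$, both are proper, so $M/U_1$ is $\Gy$-torsion by the step just proved, while the composite $U_2 \hookrightarrow M \to M/U_1$ is injective, exhibiting the nonzero $\Gy$-torsionsfreien Modul $U_2$ as a submodule of the $\Gy$-torsion module $M/U_1$ --- impossible. Thus $M$ is uniform.

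For (a), uniformity first gives a single associated prime: if $\py_1, \py_2 \in$ Ass$(M)$, then copies of $R/\py_1$ and $R/\py_2$ inside $M$ must meet nontrivially, forcing $\py_1 = \py_2 =: \py$. Next I claim $\py M = 0$. Fix $x \in \py$ and put $K = \{m \in M : xm = 0\}$. Choosing $m_0$ with Ann$_R(m_0) = \py$ shows $m_0 \in K$, so $K \neq 0$; if $xM \neq 0$ then $K \subsetneq M$, so $M/K$ is $\Gy$-torsion by the cocritical step, yet $M/K \cong xM$ is a nonzero $\Gy$-torsionsfreier submodule of $M$ --- a contradiction. Hence $xM = 0$ for every $x \in \py$, i.e. $\py M = 0$, and together with $\py \supseteq$ Ann$_R(M)$ this yields Ann$_R(M) = \py$. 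Regarding $M$ as a faithful module over the domain $R/\py$ with associated prime $(0)$, it is torsion-free there, so every $0 \neq N \subseteq M$ has Ann$_R(N) = \py = $ Ann$_R(M)$; that is, $M$ is prim.

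For (c), since $\py \in$ Ass$(M)$ and Ass$(M) \cap \Gy = \emptyset$ we have $\py \notin \Gy$, so it remains to show that every prime $\qy \supsetneq \py$ lies in $\Gy$ --- and this is the crux. Fix $0 \neq y \in M$ (so Ann$_R(y) = \py$) and set $U = \qy y$. A short computation gives $\{r \in R : ry \in U\} = \qy$, so the class $\bar y \in M/U$ has annihilator exactly $\qy$; moreover $y \notin U$ (otherwise $(1-q)y = 0$ for some $q \in \qy \subseteq \my$, forcing $y = 0$), so $\bar y \neq 0$ and $U \subsetneq M$, while $U \neq 0$ because any $x \in \qy \setminus \py$ gives $xy \neq 0$. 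Thus $U$ is nonzero and proper, hence $M/U$ is $\Gy$-torsion by the cocritical step, and $\qy \in$ Ass$(M/U) \subseteq \Gy$. Therefore $\py$ is maximal in Spec$(R) \setminus \Gy$, i.e. Ann$_R(M) = \py \in \Gy^\ast$. The delicate point throughout is precisely this last construction: manufacturing, from the single datum $\py = $ Ann$_R(M)$, a proper nonzero submodule $U$ whose quotient \emph{sees} a prescribed overprime $\qy$ as an associated prime, which is exactly what converts cocriticality into the maximality of $\py$.
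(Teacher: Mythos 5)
Your proof is correct, and its skeleton coincides with the paper's own: the first step, upgrading the defining property to cocriticality ($M/U$ ist $\Gy$-torsion f\"ur jedes $0\neq U\subsetneq M$, via the torsion submodule $U'/U=t_\Gy(M/U)$ and the fact that $M/U'$ is $\Gy$-torsionsfrei), is exactly the paper's proof of (b), and the uniformity argument is the same. The genuine divergence is in (a) and in how (c) is packaged. For (a) the paper is slicker and proves more: every endomorphism $0\neq f\colon M\to M$ is injective (so $M$ is even stark-prim), because $M/\mathrm{Ke}\,f\cong\mathrm{Bi}\,f$ is again $\Gy$-torsionsfrei; that this forces $\py=\mathrm{Ann}_R(M)$ prime with $\mathrm{Ass}(M)=\{\py\}$ is then quoted from \cite[Lemma 1.1]{12}. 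You instead specialize to multiplication maps, prove $\py M=0$ and $\mathrm{Ann}_R(M)=\py$ by hand, and deduce primeness from torsion-freeness over $R/\py$ --- more laborious, but self-contained, replacing the citation by an explicit argument; all steps check out (e.g.\ $M/K\cong xM$ nonzero torsionsfrei versus torsion is a valid contradiction). For (c), the paper embeds $R/\py\cong Ry\hookrightarrow M$ (every nonzero submodule being again einfach-$\Gy$-torsionsfrei) and applies the definition to $\qy/\py$, getting $R/\qy$ $\Gy$-torsion, hence $\qy\in\Gy$; your submodule $U=\qy y$ is precisely the image of $\qy/\py$ under that embedding, so the construction you single out as the crux is the same mechanism carried out inside $M$ rather than in the cyclic submodule, and your verifications ($\{r\,|\,ry\in U\}=\qy$ using $\py$ prime, $y\notin U$ because $1-q$ is a unit in the local ring, $U\neq 0$ via $x\in\qy\setminus\py$) are all sound. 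In short: same strategy throughout, with your version trading the paper's appeal to \cite{12} and its stark-prim shortcut for elementary, fully spelled-out computations.
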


\begin{proof}
\begin{enumerate}
\item[(a)]
Sogar jeder Endomorphismus $0 \neq f: M \to M$ ist injektiv, denn $M/{\rm Ke}f \cong $ Bi$f$ ist wieder $\Gy$-torsionsfrei, also Ke$f =0.$
\item[(b)]
Sogar f"ur jeden Untermodul $0 \neq U \subset M$ ist $M/U\;\Gy$-torsion, denn mit $U_1/U=\{\bar{x} \in M/U|\; \ay \bar{x} =0$ f"ur ein $\ay \in \Gy\}$ folgt $M/U_1\;\Gy$-torsionsfrei, also $U_1=M$.
Damit ist auch $U$ einfach-$\Gy$-torsionsfrei, und aus $V \subset M,\;V \cap U=0$ folgt $V\;\Gy$-torsion, $V=0$.
\item[(c)]
Nach \cite[Lemma 1.1]{12} ist $\py = $ Ann$_R(M) \in $ Spec$(R)$ und Ass$(M) = \{\py\}$, also $\py \notin \Gy$, und aus $\py \subsetneq \qy$ folgt $R/\qy\;\Gy$-torsion, $\qy \in \Gy$.
\end{enumerate}
\end{proof}  

Wir zeigen im folgenden (1.5), da"s diese drei Bedingungen sogar "aquivalent sind mit ''$M$ einfach-$\Gy$-torsionsfrei'', und in mehreren F"allen (c) allein gen"ugt.

\begin{lemma}
Ist $\py \in \Gy^\ast$ und $N$ ein $R$-Modul mit $\py N=0$, so gilt:
\begin{enumerate}
\item[(a)]
$N\;\Gy$-torsionsfrei $\;\Longleftrightarrow\;N$ als $R/\py$-Modul torsionsfrei.
\item[(b)]
$N\;\Gy$-teilbar $\;\Longleftrightarrow\; N$ als $R/\py$-Modul teilbar.
\end{enumerate}
\end{lemma}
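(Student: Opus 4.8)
The plan is to reduce both equivalences to two ingredients: the defining description of the property in question through associated (resp.\ coassociated) primes avoiding $\Gy$, and the two features packaged in $\py \in \Gy^\ast$, namely $\py \notin \Gy$ together with the maximality property that every prime $\qy$ with $\py \subsetneq \qy$ already lies in $\Gy$. The hypothesis $\py N = 0$ is what links them: it forces $\py \subseteq$ Ann$(N)$, so every prime in Ass$(N)$ --- and dually in Koass$(N)$ --- contains $\py$. Over the domain $R/\py$ the notions ``torsionsfrei'' and ``teilbar'' are then governed by whether such a prime can strictly exceed $\py$, and it is exactly the maximality in $\Gy^\ast$ that converts ``strictly above $\py$'' into ``in $\Gy$''. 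The two parts are formally dual, so I would set them up in parallel.

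For (a), the direction ``$\Leftarrow$'' is the easy one: if $N$ is torsionfrei over $R/\py$, I take any $\qy \in$ Ass$_R(N)$, write $\qy = $ Ann$(x)$, and note $\qy \supseteq \py$; were $\qy \supsetneq \py$, some $d \in \qy \setminus \py$ would annihilate the nonzero element $x$ modulo $\py$, contradicting torsion-freeness over the domain $R/\py$. Hence $\qy = \py \notin \Gy$, so Ass$_R(N) \cap \Gy = \emptyset$. For ``$\Rightarrow$'', suppose $N$ is not torsionfrei over $R/\py$, say $dx = 0$ with $0 \neq x$ and $d \notin \py$. Then Ann$(x) \supseteq \py + (d) \supsetneq \py$, so every $\qy \in$ Ass$(Rx) \subseteq$ Ass$(N)$ strictly contains $\py$ and therefore lies in $\Gy$ by maximality; this produces a prime in Ass$(N) \cap \Gy$, contradicting $\Gy$-torsion-freeness. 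This settles (a) using only the standard properties of Ass (nonempty for a nonzero Noetherian module, compatible with submodules, members containing the annihilator).

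For (b) I would transcribe this argument dually, replacing Ass by Koass, submodules by quotients, and the annihilation of a cocyclic submodule by that of a cocyclic quotient. Concretely, for ``$\Leftarrow$'' (teilbar $\Rightarrow$ $\Gy$-teilbar) I take $\qy \in$ Koass$(N)$, realized as the annihilator of a cocyclic quotient $N \twoheadrightarrow L$; it contains $\py$, and if $\qy \supsetneq \py$ then some $d \in \qy \setminus \py$ gives $dL = 0$ while divisibility gives $dN = N$, whence $L = dL = 0$, a contradiction, so $\qy = \py \notin \Gy$. For ``$\Rightarrow$'', if $N$ is not teilbar over $R/\py$ then $dN \neq N$ for some $d \notin \py$, so $N/dN \neq 0$ admits some $\qy \in$ Koass$(N/dN) \subseteq$ Koass$(N)$; since $\qy \supseteq \py + (d) \supsetneq \py$ it lies in $\Gy$, contradicting $\Gy$-teilbarkeit. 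The one real obstacle is that part (b) rests on the dual machinery of coassociated primes --- that Koass$(N) = \emptyset$ iff $N = 0$, that Koass passes to quotients, and that its members are annihilators of cocyclic quotients containing Ann$(N)$ --- so I would either cite these properties of Koass or establish them first; granted them, (b) is a line-by-line mirror of (a).
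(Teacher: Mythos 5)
Your proof is correct, but it takes a genuinely different route from the paper's. The paper argues directly with elements: for ``$\Rightarrow$'' in (a) it observes that for $s \notin \py$ every minimal prime over $\py + (s)$ strictly contains $\py$, hence lies in $\Gy$ by the maximality in $\Gy^\ast$, so $\sqrt{\py+(s)} \in \Gy$ and therefore (Noetherianness of $R$) $\py+(s) \in \Gy$; then $\Gy$-torsion-freeness gives $N[\py+(s)]=0$, and $\py N=0$ turns this into $N[s]=0$, i.e.\ torsion-freeness over $R/\py$. For ``$\Leftarrow$'' it simply picks $s_0 \in \ay\setminus\py$ (possible because $\py \notin \Gy$) and gets $N[\ay] \subseteq N[s_0] = 0$. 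Part (b) is then the literal elementwise dual --- $\py+(s)\in\Gy$ forces $(\py+(s))N = sN = N$, respectively $\ay N \supseteq s_0N = N$ --- which is why the paper can dismiss it with ``Entsprechend''. You instead route everything through Ass and Koass: where the paper uses the radical-closure property of Gabriel topologies, you use the existence of an associated prime of $Rx$ lying above $\py+(d)$, and dually a coassociated prime of $N/dN$; both arguments exploit the maximality of $\py$ in exactly the same way, and all your steps are sound. The trade-off is the one you flag yourself: your version of (b) leans on nontrivial properties of Koass (nonemptiness for nonzero modules, stability under passage to quotient modules, realization of coassociated primes as annihilators of cocyclic quotients), which are available in the cited reference \cite{10} but which the paper's computation avoids entirely --- the paper needs only the standard translation of ``$\Gy$-teilbar'' into $\ay N = N$ for all $\ay \in \Gy$. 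Your approach is heavier for (b) but has the merit of making the Ass/Koass duality between the two parts fully explicit, whereas the paper's is shorter and self-contained at the level of elements.
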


\pagebreak

\begin{proof}
\begin{enumerate}
\item[(a)]
$''\Rightarrow\;''$ \quad
F"ur jedes $0\neq \bar{s} \in R/\py$ ist $s \notin \py$, also nach Voraussetzung $\sqrt{\py +(s)} \in \Gy,\;\py +(s) \in \Gy,\;N[\py +(s)]=0,\;N[s]=0,\;N[\bar{s}]=0$.

$''\Leftarrow\;''$ \quad
F"ur jedes $\ay \in \Gy$ ist nach Voraussetzung $\ay \nsubseteq \py$, mit $s_0 \in \ay \backslash \py$ also $N[\bar{s}_0]=0,\;N[s_0]=0,\;N[\ay]=0.$
\item[(b)]
Entsprechend.
\end{enumerate}
\end{proof}

\begin{corollary}
Ist $\py \in \Gy^\ast$ und $N$ ein $R$-Modul mit $\py N=0$, so gilt:
\begin{enumerate}
\item[(a)]
$N$  einfach-$\Gy$-torsionsfrei $\; \Longleftrightarrow  \;N$ als $R/\py$-Modul einfach-torsionsfrei.
\item[(b)]
$N$ einfach-$\Gy$-teilbar $\;\Longleftrightarrow\;N$ als $R/\py$-Modul einfach-teilbar.
\end{enumerate}
\end{corollary}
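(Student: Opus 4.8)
The plan is to deduce the corollary formally from the preceding Lemma~1.2, by observing that the two ``einfach'' conditions are nothing but the plain $\Gy$-torsion-free, resp.\ $\Gy$-divisible, conditions applied simultaneously to $N$ and to all of its proper quotients, each of which is still annihilated by $\py$. Since $R/\py$ is a domain, torsion-freeness over $R/\py$ is the ordinary notion and agrees with the $\Gy_1$-version from the introduction, so the right-hand sides of (a) and (b) really are the unrelativized ``einfach'' notions over $R/\py$.

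First I would record the structural fact that makes everything match up: from $\py N = 0$ it follows that every $R$-submodule $U \subseteq N$ satisfies $\py U = 0$ and every quotient $N/U$ satisfies $\py(N/U) = 0$; moreover the lattice of $R$-submodules of $N$ coincides with the lattice of $R/\py$-submodules of $N$. Hence the quantifier ``for all $0 \neq U \subsetneq N$'' occurring in the definition of einfach ranges over exactly the same set of submodules whether $N$ is read as an $R$-module or as an $R/\py$-module, and each $N/U$ lies again in the scope of Lemma~1.2.

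For (a) I would then unwind the definition: $N$ is einfach-$\Gy$-torsionsfrei precisely when $N \neq 0$ is $\Gy$-torsionsfrei and $N/U$ fails to be $\Gy$-torsionsfrei for every $0 \neq U \subsetneq N$. Applying Lemma~1.2(a) to $N$ itself rewrites ``$N$ $\Gy$-torsionsfrei'' as ``$N$ torsionsfrei als $R/\py$-Modul'', and applying the same lemma to each $N/U$ (legitimate because $\py(N/U)=0$) rewrites ``$N/U$ nicht $\Gy$-torsionsfrei'' as ``$N/U$ nicht torsionsfrei als $R/\py$-Modul''. Combining these termwise yields exactly the defining conditions for $N$ being einfach-torsionsfrei as $R/\py$-module, and the equivalence follows in both directions. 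Part (b) is entirely parallel, invoking Lemma~1.2(b) in place of (a).

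I do not expect a real obstacle: the statement is a formal consequence of Lemma~1.2 together with the annihilation hypothesis. The only point deserving a moment's care is the verification that Lemma~1.2 applies not merely to $N$ but to every quotient $N/U$ — that is, that the hypothesis $\py(N/U)=0$ persists — and that the two submodule lattices coincide; once this is noted, the two ``einfach'' conditions translate into one another essentially by inspection.
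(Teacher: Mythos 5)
Your argument is correct and is exactly the intended one: the paper states this corollary without proof, treating it as an immediate consequence of Lemma~1.2, and your write-up simply makes that explicit by applying the lemma to $N$ and to each quotient $N/U$ (resp.\ submodule $V$), all of which inherit $\py(N/U)=0$ (resp.\ $\py V=0$), together with the observation that the $R$- and $R/\py$-submodule lattices of $N$ coincide. No gap.
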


\begin{theorem}
Sei $\py$ ein Primideal und $\kappa(\py)=R_\py/\py R_\py$.
Dann sind "aquivalent:
\begin{enumerate}
\item[(i)]
$\kappa(\py)$ ist als $R$-Modul einfach-$\Gy$-torsionsfrei.
\item[(ii)]
$\kappa(\py)$ ist als $R$-Modul einfach-$\Gy$-teilbar.
\item[(iii)]
$\py \in \Gy^\ast$.
\end{enumerate}
\end{theorem}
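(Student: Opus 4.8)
The plan is to reduce the whole statement to the domain $D := R/\py$, using that $\kappa(\py)=R_\py/\py R_\py$ is nothing but the quotient field $Q:=\mathrm{Frac}(R/\py)$, regarded as an $R$-module. In particular $\py\,\kappa(\py)=0$, so $\kappa(\py)$ is a faithful $D$-module with $\mathrm{Ann}_R(\kappa(\py))=\py$, and its $R$-submodules coincide with its $D$-submodules. The theorem then rests on the elementary fact that over the domain $D$ the field $Q$ is both einfach-torsionsfrei and einfach-teilbar: every proper nonzero $D$-submodule $U$ has torsion quotient (given $0\neq a\in U\cap D$, each $c/e\in Q$ satisfies $(ae)\cdot(c/e)=ac\in Da\subseteq U$, so $Q/U$ is torsion), while conversely every nonzero divisible $D$-submodule $V$ already equals $Q$ (a single $0\neq v\in V$ forces $v/d\in V$ for all $0\neq d\in D$, hence $Q=Qv\subseteq V$).

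For (iii)$\Rightarrow$(i) and (iii)$\Rightarrow$(ii) I would simply invoke Corollary 1.3: since $\py\in\Gy^\ast$ and $\py\,\kappa(\py)=0$, parts (a) and (b) reduce the two claims precisely to the statements that $\kappa(\py)$ is einfach-torsionsfrei, resp.\ einfach-teilbar, as an $R/\py$-module, which are the two field facts just noted. For (i)$\Rightarrow$(iii) I would apply Lemma 1.1(c) to $M=\kappa(\py)$: it yields $\mathrm{Ann}_R(\kappa(\py))\in\Gy^\ast$, and since this annihilator is $\py$, we obtain $\py\in\Gy^\ast$.

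The implication (ii)$\Rightarrow$(iii) is the one I expect to cost the most work, because there is no dual of Lemma 1.1(c) available to read off the annihilator. I would argue directly, using the standard reformulation that a module $N$ is $\Gy$-teilbar iff $\ay N=N$ for every $\ay\in\Gy$: for one direction, any $\py'\in\mathrm{Koass}(N/\ay N)$ lies in $\mathrm{Koass}(N)$ and contains $\ay$, hence lies in $\Gy$; for the other, a cocyclic quotient of $N$ with annihilator $\py'\in\Gy$ would give $\py' N\neq N$. Applied to the nonzero module $\kappa(\py)$ with $\py\,\kappa(\py)=0$, this immediately excludes $\py\in\Gy$, so $\py\in\mathrm{Spec}(R)\setminus\Gy$.

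It remains to prove maximality of $\py$ in $\mathrm{Spec}(R)\setminus\Gy$, and this is the crux. Suppose $\py\subsetneq\qy$ with $\qy$ prime and $\qy\notin\Gy$. As $\Gy$ is a Gabriel topology, hence upward closed, $\qy\notin\Gy$ forces $\ay\nsubseteq\qy$ for every $\ay\in\Gy$. I would then exhibit a proper nonzero $\Gy$-teilbar submodule of $\kappa(\py)$, contradicting (ii): put $\bar\qy:=\qy/\py$, a nonzero prime of $D$, and take the localization $D_{\bar\qy}\subseteq Q=\kappa(\py)$, which is a proper (since $\bar\qy\neq 0$) nonzero $D$-submodule. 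For each $\ay\in\Gy$ any $s\in\ay\setminus\qy$ becomes a unit in $D_{\bar\qy}$, so $\ay\,D_{\bar\qy}=D_{\bar\qy}$, and by the reformulation $D_{\bar\qy}$ is $\Gy$-teilbar. This contradiction shows that every prime properly containing $\py$ lies in $\Gy$, i.e.\ $\py\in\Gy^\ast$. The main obstacle is precisely isolating the localization $D_{\bar\qy}$ as the correct witness and verifying its $\Gy$-teilbarkeit; every other step is a clean reduction to the field $Q$.
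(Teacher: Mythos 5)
Your proof is correct, and for three of the four implications it coincides with the paper's: (iii)$\Rightarrow$(i),(ii) via Folgerung 1.3 plus the field facts about $Q=\mathrm{Frac}(R/\py)$, and (i)$\Rightarrow$(iii) via Lemma 1.1(c) are exactly the paper's steps. The interesting divergence is in (ii)$\Rightarrow$(iii). The paper proceeds by a case distinction on $\dim(R/\py)$, reduces via upward closure of $\Gy$ to primes $\qy$ with $h(\qy/\py)=1$, and then cites \cite[Folgerung 4.7]{10} to compute $\mathrm{Koass}_{\bar R}(\bar R_{\bar\qy})=\{0,\bar\qy\}$; from ``$U=\bar R_{\bar\qy}$ is not $\Gy$-teilbar'' and $\py\notin\Gy$ it then extracts $\qy\in\Gy$. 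You instead argue contrapositively with the \emph{same} witness module $D_{\bar\qy}$ but verify its $\Gy$-Teilbarkeit directly: for $\qy\notin\Gy$ every $\ay\in\Gy$ meets $R\setminus\qy$, such elements act as units on $D_{\bar\qy}$, hence $\ay\,D_{\bar\qy}=D_{\bar\qy}$. This only needs the easy ``surjectivity'' direction and never the full set of coassociated primes, so you avoid the external reference, the height-one reduction, and the case analysis (your argument even absorbs the paper's cases $\dim(R/\py)=1,2$ uniformly, since $\qy=\my$ gives $D_{\bar\my}=D\subsetneq Q$). The one hinge is your reformulation ``$N$ $\Gy$-teilbar $\Longleftrightarrow \ay N=N$ for all $\ay\in\Gy$'' relative to the paper's definition via $\mathrm{Koass}(N)\cap\Gy=\emptyset$; this requires $\mathrm{Koass}(X)\neq\emptyset$ for $X\neq 0$ and $\mathrm{Koass}(N/\ay N)\subseteq\mathrm{Koass}(N)$, which you correctly sketch, and the paper itself uses this equivalence tacitly (e.g.\ in the proof of Lemma 3.1(a), where $\ay W=W$ for all $\ay\in\Gy$ is read as ``$W$ ist $\Gy$-teilbar''). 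Net effect: your route is more elementary and self-contained, at the cost of spelling out a standard Koass lemma; the paper's route is shorter on the page because it delegates the divisibility analysis of $\bar R_{\bar\qy}$ to \cite{10}.
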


\begin{proof}
Mit $N=\kappa(\py)$ ist Ann$_R(N)=\py$ und $N$ als $R/\py$-Modul der Quotientenk"orper von $R/\py$,
also $N$ als $R/\py$-Modul einfach-torsionsfrei und einfach-teilbar, so da"s mit (1.3) sofort folgt $(iii) \to (i)$ und $(iii)  \to (ii)$, mit (1.1) aber auch $(i) \to (iii)$.

Bleibt $(ii) \to (iii)$ zu zeigen, worin $\py \notin \Gy$ klar ist wegen $\py N \neq N$.\\
1.Fall: $\dim(R/\py)=0$. Dann gibt es  kein  echt gr"o"seres Primideal, so da"s $\py \in \Gy^\ast$ ist.\\
2.Fall: $\dim(R/\py)=1$. Dann ist $0  \neq R/\py \subsetneq \kappa(\py)$, also nach Voraussetzung
$ \Gy \neq \{R\},\;\my \in \Gy,\;\py \in \Gy^\ast$.\\
3.Fall: $\dim(R/\py)\geq 2.$ Dann ist wie im 2.Fall  $\my \in \Gy$, und f"ur jedes Primideal $\py \subsetneq \qy \subsetneq \my$ m"ussen wir jetzt $\qy \in \Gy$ zeigen:\\
o.B.d.A. sei  $h(\qy/\py)=1.$ "Uber dem Integrit"atsring $\bar{R} =R/\py$ gilt f"ur die Lokalisierung $U=\bar{R}_{\bar{\qy}}$ nach \cite[Folgerung 4.7]{10} Koass$_{\bar{R}}(U) =\{0,\bar{\qy}\},$
d.h. Koass$_R(U) =\{\py,\qy\}$.
Wegen $R/\py \subsetneq U \subsetneq \kappa(\py)$ ist nach Voraussetzung $_RU$ nicht $\Gy$-teilbar, so da"s aus Koass$_R(U) \cap \Gy \neq \emptyset$ folgt $\qy \in \Gy$.
\end{proof}

\begin{theorem}
F"ur einen $R$-Modul $M$ sind "aquivalent:
\begin{enumerate}
\item[(i)]
$M$ ist einfach-$\Gy$-torsionsfrei.
\item[(ii)]
$M$ ist prim, uniform und Ann$_R(M) \in  \Gy^\ast$.
\item[(iii)]
Es gibt ein $\py \in \Gy^\ast$ mit $0\neq M  \hookrightarrow \kappa(\py)$.
\end{enumerate}
\end{theorem}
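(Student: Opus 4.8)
The plan is to close the cycle of implications $(i) \Rightarrow (ii) \Rightarrow (iii) \Rightarrow (i)$. The first implication $(i) \Rightarrow (ii)$ requires no new work: it is exactly the content of Lemma (1.1), parts (a), (b), (c). Hence the real work sits in $(ii) \Rightarrow (iii)$, while $(iii) \Rightarrow (i)$ will drop out quickly from (1.4) together with an observation already made in the proof of (1.1)(b).

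For $(ii) \Rightarrow (iii)$, set $\py = \mathrm{Ann}_R(M) \in \Gy^\ast$. Since $M$ is prim with annihilator $\py$, one has $\py M = 0$ and $M$ is a nonzero torsionsfreier module over the Integrit\"atsring $A = R/\py$, whose Quotientenk\"orper is precisely $\kappa(\py) = R_\py/\py R_\py$. Because $\py M = 0$, the $R$-submodules and the $A$-submodules of $M$ coincide, so $M$ is also uniform as $A$-module. The idea is then to pass to the fraction field: torsion-freeness over the domain $A$ yields an embedding $M \hookrightarrow M \otimes_A \kappa(\py) =: V$ into a $\kappa(\py)$-vector space, and I claim that uniformity forces $\dim_{\kappa(\py)} V = 1$. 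Indeed, any two $\kappa(\py)$-linearly independent vectors of $V$ can be scaled into $M$ and there generate two nonzero $A$-submodules meeting only in $0$, contradicting uniformity; since $M \neq 0$ this gives $V \cong \kappa(\py)$, and the composite $0 \neq M \hookrightarrow V \cong \kappa(\py)$ is the embedding sought in (iii).

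For $(iii) \Rightarrow (i)$, assume $0 \neq M \hookrightarrow \kappa(\py)$ with $\py \in \Gy^\ast$. By (1.4) the module $\kappa(\py)$ is itself einfach-$\Gy$-torsionsfrei, so it suffices to recall that every nonzero submodule of an einfach-$\Gy$-torsionsfrei module is again einfach-$\Gy$-torsionsfrei --- this is exactly what was noted inside the proof of (1.1)(b). Applying it to the image of $M$ in $\kappa(\py)$ shows $M$ einfach-$\Gy$-torsionsfrei. (Alternatively, one may invoke (1.3)(a): as $\py M = 0$ and $M$ is a nonzero submodule of the Quotientenk\"orper $\kappa(\py)$ of the domain $R/\py$, it is einfach-torsionsfrei als $R/\py$-Modul, and (1.3)(a) transfers this back to einfach-$\Gy$-torsionsfrei.)

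The main obstacle is the rank argument in $(ii) \Rightarrow (iii)$, namely seeing that ``uniform plus torsion-free over a domain'' collapses the torsion-free rank to $1$ and thereby embeds $M$ into the fraction field. This is elementary once the reduction to $A = R/\py$ has been carried out, so I expect only bookkeeping difficulty; the one point to check with care is that uniformity is preserved under the change between the $R$- and the $A$-module structure, which is immediate from $\py M = 0$.
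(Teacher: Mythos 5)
Your proof is correct, and while your cycle $(i)\to(ii)\to(iii)\to(i)$, your treatment of $(i)\to(ii)$ (exactly Lemma (1.1)) and of $(iii)\to(i)$ (via (1.4) plus the hereditary observation from the proof of (1.1)(b)) all coincide with the paper, your argument for the key step $(ii)\to(iii)$ is genuinely different. The paper invokes \cite[Lemma 1.1]{12} to get $\mathrm{Ass}(M)=\{\py\}$ and then uses injective-hull machinery: uniformity plus $\mathrm{Ass}(M)=\{\py\}$ give $M\hookrightarrow E(R/\py)$, and since $\py M=0$ the image lies in $E(R/\py)[\py]\cong\kappa(\py)$ --- one line. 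You instead work entirely over the domain $A=R/\py$: prim means every nonzero element of $M$ has annihilator exactly $\py$, so $M$ is torsion-free over $A$ and embeds into $V=M\otimes_A\kappa(\py)$, and your scaling argument correctly shows that uniformity (which transfers between the $R$- and $A$-structures, as you note) forces $\dim_{\kappa(\py)}V=1$, hence $M\hookrightarrow\kappa(\py)$. Your route is more elementary --- it avoids injective hulls, the identification $E(R/\py)[\py]\cong\kappa(\py)$, and the associated-primes statement from \cite{12} --- at the cost of a slightly longer verification (the rank-one argument, including the check that two independent vectors can be scaled into $M$ and generate submodules meeting in zero, which you carry out correctly). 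The paper's route is shorter given its standing machinery and exhibits $\kappa(\py)$ structurally as the $\py$-socle of the injective hull; both proofs are complete, and your alternative ending for $(iii)\to(i)$ via (1.3)(a) is also valid, since $\py M=0$ follows from $M\hookrightarrow\kappa(\py)$.
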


\begin{proof}
Bei $(iii) \to (i)$ ist $\kappa(\py)$ nach (1.4)  einfach-$\Gy$-torsionsfrei, also auch  der Untermodul $M$, und $(i) \to (ii)$  ist (1.1).

Blebt $(ii) \to (iii)$ zu zeigen: Weil $M$ prim, also $\py =$ Ann$_R(M)$ ein Primideal und Ass$(M)=\{\py\}$ ist \cite[Lemma 1.1]{12}, folgt nach Voraussetzung $\py \in \Gy^\ast$ und $M \hookrightarrow E(R/\py)[\py]\cong \kappa(\py)$.
\end{proof}

Mit $(ii)$ erh"alt man sofort folgendes:

{\bf Beispiel 1}\quad
Ein zyklischer $R$-Modul $M$ ist genau dann einfach-$\Gy$-torsionsfrei,  wenn Ann$_R(M) \in \Gy^\ast$ ist.

Eine weitere Anwendung von $(ii)$ ergibt sich aus folgender  Variante von \cite[Satz 1.2]{12}:

\begin{lemma}
Sei $S$ eine multiplikative Teilmenge von $R$ und $M=R_S$. Dann sind "aquivalent:
\begin{enumerate}
\item[(i)]
$M$ ist als $R$-Modul prim.
\item[(ii)]
Ann$_R(M)$ ist ein Primideal.
\item[(iii)]
$R_S$ ist ein Integrit"atsring.
\item[(iv)]
Es gibt ein $\py \in$ Spec$(R)$ mit $0 \neq M \hookrightarrow \kappa(\py).$
\end{enumerate}
\end{lemma}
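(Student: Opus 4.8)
The plan is to route all four conditions through a single structural observation: $M=R_S$ is itself a ring of fractions of $R/\mathrm{Ann}_R(M)$ by non-zero-divisors. Write $\ay=\mathrm{Ann}_R(M)$ and record that $\ay=\ker(R\to R_S)=\{r\in R\mid sr=0\text{ for some }s\in S\}$. First I would check that the image $\bar S$ of $S$ in $\bar R:=R/\ay$ consists of non-zero-divisors: if $\bar s\,\bar r=0$ in $\bar R$, then $sr\in\ay$, hence $tsr=0$ for some $t\in S$, so $(ts)r=0$ with $ts\in S$, giving $r\in\ay$ and $\bar r=0$. The universal property of localization then yields a ring isomorphism $M=R_S\cong \bar R_{\bar S}$, $R$-linear for the module structures, and in particular an embedding $\bar R\hookrightarrow M$ exhibiting $M$ as a localization of $\bar R$ at a set of non-zero-divisors. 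Once this chain is in hand, everything else is formal.

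With this I would run the cycle $(i)\Rightarrow(ii)\Rightarrow(iii)\Rightarrow(iv)\Rightarrow(i)$. The implication $(i)\Rightarrow(ii)$ is immediate from \cite[Lemma 1.1]{12}, by which a prime module has prime annihilator. For $(ii)\Rightarrow(iii)$: if $\ay$ is prime then $\ay\neq R$, so $M\neq0$ and $0\notin\bar S$; since $\bar R$ is now a domain, $\bar S\subseteq\bar R\setminus\{0\}$ and $M\cong\bar R_{\bar S}$ is a subring of $\mathrm{Frac}(\bar R)$, hence an integral domain. For $(iii)\Rightarrow(iv)$: if $M$ is a domain, then its subring $\bar R$ is a domain, so $\ay$ is prime and $\mathrm{Frac}(\bar R)=\kappa(\ay)$; the embedding $0\neq M=\bar R_{\bar S}\hookrightarrow\kappa(\ay)$ is exactly $(iv)$ with $\py=\ay$. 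Finally $(iv)\Rightarrow(i)$: the module $\kappa(\py)=\mathrm{Frac}(R/\py)$ is prime, since for $0\neq\xi\in\kappa(\py)$ and $a\in R$ one has $a\xi=0\iff a\in\py$, so $\mathrm{Ann}_R(\xi)=\py$ for every nonzero $\xi$; therefore its nonzero submodule $M$ is again prime.

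The one step requiring genuine care, rather than bookkeeping, is the identification $M\cong\bar R_{\bar S}$ together with the verification that $\bar S$ consists of non-zero-divisors: this is what converts the purely module-theoretic hypotheses $(i)$/$(ii)$ into the ring-theoretic statement $(iii)$ and produces the canonical target $\kappa(\ay)$ needed for $(iv)$. Beyond this I expect no obstacle, the result being explicitly a variant of \cite[Satz 1.2]{12}. The degenerate case is handled uniformly, because $M=0\iff\ay=R$, and then all four conditions fail at once: a prime module, an integral domain, and an embedding of a nonzero module each require $M\neq0$.
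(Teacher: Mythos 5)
Your proof is correct, but it takes a genuinely different route from the paper's. The paper argues piecewise with the module-theoretic machinery of Section 1: for $(ii)\to(iii)$ it shows $\py\cap S=\emptyset$ (an $s_0\in\py\cap S$ would act both invertibly and as zero on $M$, forcing $M=0$) and concludes that $\py R_S=0$ is a prime ideal of $R_S$; for $(iii)\to(i)$ it observes that every $R$-endomorphism $0\neq f\colon M\to M$ is automatically $R_S$-linear, hence injective; and for $(iii)\to(iv)$ it shows $M$ is uniform as $R$-module and embeds it into $E(R/\py)[\py]\cong\kappa(\py)$ via the injective hull. You instead route everything through the single structural identification $R_S\cong(R/\ay)_{\bar S}$, where $\ay=\mathrm{Ann}_R(M)=\mathrm{Ke}(R\to R_S)$ and $\bar S$ consists of non-zero-divisors of $R/\ay$ (both correctly verified), which makes the embedding $0\neq M\hookrightarrow\mathrm{Frac}(R/\ay)=\kappa(\ay)$ completely explicit and eliminates both the injective hull and the uniformity argument; your $(iv)\Rightarrow(i)$ then closes the cycle elementarily, since every nonzero element of $\kappa(\py)$ has annihilator exactly $\py$, so every nonzero submodule of $\kappa(\py)$ is prime. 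Your version is more self-contained and constructive, and it identifies the prime in $(iv)$ canonically as $\mathrm{Ann}_R(M)$; what the paper's route buys is the stronger byproduct that $M$ is stark-prim --- its proof of $(iii)\to(i)$ is exactly what the opening of Section 2 cites alongside (1.1 a) --- a fact your cycle does not record explicitly, though it is recoverable from your embedding into $\kappa(\py)$ by the same argument as in (2.1).
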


\begin{proof}
Klar ist $(i) \to (ii)$, und bei $(ii) \to (iii)$ folgt mit $\py:=$ Ann$_R(M)$, da"s $\py \cap S =\emptyset$ ist [denn ein $s_0 \in \py$ w"urde $s_0M=M$, also $M=0$ implizieren],
also $\py R_S=0\in$ Spec$(R_S)$.

Bei $(iii) \to (i)$ ist jeder $R$-Endomorphismus $0  \neq f: M \to M$ auch $R_S$-linear, also injektiv.

Klar ist $(iv) \to (ii)$ wegen Ann$_R(M)=\py$, und bei $(iii) \to (iv)$ ist $M$ auch als $R$-Modul  uniform, so da"s mit $\py:=$ Ann$_R(M)$ folgt $M \hookrightarrow E(R/\py)[\py] \cong \kappa(\py).$
\end{proof}

{\bf Beispiel 2}\quad
Sei $\Gy$ eine Gabriel-Topologie auf $R$. Genau dann ist der $R$-Modul $M=R_S$ einfach-$\Gy$-torsionsfrei, wenn Ann$_R(M) \in \Gy^\ast$ ist.

Falls ein $R$-Modul $N$ einfach-$\Gy$-teilbar und {\bf reflexiv} ist, gibt es nach (1.8) auch einen Epimorphismus $\kappa(\py) \twoheadrightarrow N$ mit $\py \in \Gy^\ast$:

\begin{lemma}
Sei $E$ die injektive H"ulle von $R/\my$ und $N^\circ = $ Hom$_R(N,E)$ das Matlis-Duale von $N$. Dann gilt:
\begin{enumerate}
\item[(a)]
$N^\circ$ einfach-$\Gy$-torsionsfrei $\;\Longrightarrow\;N$ einfach-$\Gy$-teilbar.
\item[(b)]
$N^\circ$ einfach-$\Gy$-teilbar $\; \Longrightarrow\;N$ einfach-$\Gy$-torsionsfrei.
\item[(c)]
Ist $N$ reflexiv, gilt beide Male die Umkehrung.
\end{enumerate}
\end{lemma}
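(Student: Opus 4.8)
The plan is to isolate two duality dictionaries, valid for an \emph{arbitrary} $R$-module $Y$ (with no reflexivity assumption), and then to read off (a) and (b) by the standard reversal of short exact sequences under $(-)^\circ$. Since $E$ is an injective cogenerator, $(-)^\circ$ is exact, $Y=0\Leftrightarrow Y^\circ=0$, and every sequence $0\to U\to N\to N/U\to 0$ dualizes to $0\to(N/U)^\circ\to N^\circ\to U^\circ\to 0$. I also use the elementary reformulations (for a Gabriel-Topologie) that $Y$ is $\Gy$-torsionsfrei iff $Y[\ay]=0$ for all $\ay\in\Gy$, and $Y$ is $\Gy$-teilbar iff $\ay Y=Y$ for all $\ay\in\Gy$.

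The dictionaries are: (P) $Y$ $\Gy$-teilbar $\Leftrightarrow$ $Y^\circ$ $\Gy$-torsionsfrei, and (Q) $Y$ $\Gy$-torsionsfrei $\Leftrightarrow$ $Y^\circ$ $\Gy$-teilbar. For (P) note that $Y^\circ[\ay]=\mathrm{Hom}_R(R/\ay,Y^\circ)=\mathrm{Hom}_R(R/\ay\otimes_R Y,E)=(Y/\ay Y)^\circ$ vanishes iff $\ay Y=Y$, so running $\ay$ through $\Gy$ shows $Y^\circ$ is $\Gy$-torsionsfrei iff $Y$ is $\Gy$-teilbar. In (Q) the direction ``$\Leftarrow$'' follows from the inclusion $Y\hookrightarrow Y^{\circ\circ}$ together with (P) applied to $Y^\circ$. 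For ``$\Rightarrow$'', fix $\ay=(a_1,\dots,a_n)\in\Gy$; the map $\varphi\colon Y\to Y^n$, $y\mapsto(a_1y,\dots,a_ny)$, has kernel $Y[\ay]=0$ and is therefore injective, so dualizing produces a surjection $\varphi^\circ\colon(Y^\circ)^n\to Y^\circ$ whose image is exactly $\ay Y^\circ$; hence $\ay Y^\circ=Y^\circ$, and (Q) follows.

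Granting (P) and (Q), part (a) is immediate: if $N^\circ$ is einfach-$\Gy$-torsionsfrei, then $N\neq 0$ and $N$ is $\Gy$-teilbar by (P); and for $0\neq V\subsetneq N$ the dual sequence presents $V^\circ\cong N^\circ/(N/V)^\circ$ as a proper nonzero quotient of $N^\circ$, hence not $\Gy$-torsionsfrei, so $V$ is not $\Gy$-teilbar by (P). Thus $N$ is einfach-$\Gy$-teilbar. Part (b) is the mirror image: if $N^\circ$ is einfach-$\Gy$-teilbar, then $N\neq 0$ and $N$ is $\Gy$-torsionsfrei by (Q); and for $0\neq U\subsetneq N$ the submodule $(N/U)^\circ\subseteq N^\circ$ is proper and nonzero, hence not $\Gy$-teilbar, whence $N/U$ is not $\Gy$-torsionsfrei by (Q). Thus $N$ is einfach-$\Gy$-torsionsfrei. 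Finally (c) needs no new idea: if $N$ is reflexiv, then $N^{\circ\circ}\cong N$, and applying (a) and (b) with $N^\circ$ in place of $N$ yields the two converses at once.

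The step I expect to be the real obstacle is the forward direction of (Q): that $Y$ $\Gy$-torsionsfrei forces $Y^\circ$ $\Gy$-teilbar for \emph{every} $Y$. Phrased through coassociated primes this reads $\mathrm{Koass}(Y^\circ)\cap\Gy=\emptyset$; but $\mathrm{Koass}(Y^\circ)=\mathrm{Ass}(Y^{\circ\circ})$ may strictly exceed $\mathrm{Ass}(Y)$ once $Y$ is not reflexiv, so no comparison of associated primes alone can settle it. The surjectivity of $\varphi^\circ$ above is precisely what repairs this gap and keeps (a) and (b) unconditional, with reflexivity entering only in (c).
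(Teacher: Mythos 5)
Your proposal is correct and takes essentially the same route as the paper: the paper's Vorbemerkung invokes \cite[Hilfssatz 2.2]{11} for precisely your dictionaries (P) and (Q) (in the form $X^\circ[\ay]=\mbox{Ann}_{X^\circ}(\ay X)$ and $X^\circ/\ay X^\circ\cong (X[\ay])^\circ$), then proves (a) and (b) by dualizing $0\neq V\subsetneq N$ via $\mbox{Ann}_{N^\circ}(V)\cong (N/V)^\circ$ and $N^\circ/\mbox{Ann}_{N^\circ}(V)\cong V^\circ$, and gets (c) from $N\cong N^{\circ\circ}$, exactly as you do. The only difference is that you prove the cited Hilfssatz inline --- Hom-Tensor adjunction for (P), and for the critical forward direction of (Q) the dualization of the injective map $y\mapsto (a_1y,\dots,a_ny)$ --- rather than citing it.
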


\begin{proof} \quad\\
Vorbemerkung:
Nach \cite[Hilfssatz 2.2]{11} gilt f"ur jeden $R$-Modul $X$ und jedes Ideal $\ay \subset R$, da"s $X^\circ[\ay] = $ Ann$_{X^\circ}(\ay X)$ und 
$X^\circ /\ay X^\circ \cong (X[\ay])^\circ$ ist, also
$$ N^\circ \;\Gy\mbox{-torsionsfrei}\;\Longleftrightarrow\;N\;\Gy\mbox{-teilbar},$$
$$N^\circ \;\Gy\mbox{-teilbar}\;\Longleftrightarrow\;N\;\Gy\mbox{-torsionsfrei}.$$
\begin{enumerate}
\item[(a)]
F"ur jeden Untermodul $0 \neq V \subsetneq N$ folgt aus $0 \neq $ Ann$_{N^\circ}(V)\subsetneq N^\circ$, da"s $N^\circ /$Ann$_{N^\circ}(V) \cong V^\circ$ nach Voraussetzung {\bf nicht} $\Gy$-torsionsfrei ist, also $V$ nicht $\Gy$-teilbar.
\item[(b)]
Entsprechend folgt aus $0 \neq V \subsetneq N,\;0 \neq $ Ann$_{N^\circ}(V) \subsetneq N^\circ$, da"s Ann$_{N^\circ}(V) \cong (N/V)^\circ$ {\bf nicht} $\Gy$-teilbar ist, also $N/V$ nicht $\Gy$-torsionsfrei.
\item[(c)]
Klar wegen $N \cong N^{\circ\circ}$.
\end{enumerate}
\end{proof}

\begin{corollary}
Ist $N$ einfach-$\Gy$-teilbar und reflexiv, so gibt es einen Epimorphismus $\kappa(\py) \twoheadrightarrow N$ mit $\py \in \Gy^\ast$.
\end{corollary}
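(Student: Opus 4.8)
The plan is to dualize: reduce the statement about the simple-$\Gy$-divisible module $N$ to the description of simple-$\Gy$-torsion-free modules already obtained, and then dualize back, the only genuine work being to pass from the Matlis dual of $\kappa(\py)$ to $\kappa(\py)$ itself. First, since $N$ is reflexive, part (c) of Lemma (1.7) supplies the converse of (a): as $N$ is simple-$\Gy$-divisible, its dual $N^\circ$ is simple-$\Gy$-torsion-free. Applying the equivalence (1.5), (i)$\to$(iii), I obtain a prime $\py\in\Gy^\ast$, namely $\py=$ Ann$_R(N^\circ)$, together with a monomorphism $0\neq N^\circ\hookrightarrow\kappa(\py)$.

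Next I would dualize this inclusion. Since $E$ is injective, the functor $(-)^\circ$ is exact, so $N^\circ\hookrightarrow\kappa(\py)$ yields an epimorphism $\kappa(\py)^\circ\twoheadrightarrow N^{\circ\circ}$, and reflexivity gives $N^{\circ\circ}\cong N$. Thus I already have an epimorphism $\pi:\kappa(\py)^\circ\twoheadrightarrow N$; what remains is to replace its source $\kappa(\py)^\circ$ by $\kappa(\py)$ itself.

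This replacement is the crux. The idea is to use that $\kappa(\py)=R_\py/\py R_\py$ is a field: because $\py\,\kappa(\py)=0$, the module $W:=\kappa(\py)^\circ$ carries a $\kappa(\py)$-vector space structure via $(q\cdot\phi)(x)=\phi(qx)$, and this action restricts, along $R\to R/\py\hookrightarrow\kappa(\py)$, to the ordinary $R$-action on $W$. Choosing a $\kappa(\py)$-basis, I may therefore write $W=\bigoplus_{i\in I}W_i$ as an $R$-module with each $W_i\cong\kappa(\py)$. Now every image $\pi(W_i)$ is a homomorphic image of $\kappa(\py)$, which is $\Gy$-divisible by (1.4), (iii)$\to$(ii); as $\Gy$-divisibility passes to factor modules (Koass can only shrink under quotients), each $\pi(W_i)$ is a $\Gy$-divisible submodule of $N$. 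Since $N$ is simple-$\Gy$-divisible, this forces $\pi(W_i)\in\{0,N\}$, and from $N=\sum_{i\in I}\pi(W_i)\neq 0$ some $W_{i_0}$ satisfies $\pi(W_{i_0})=N$; the restriction $\pi|_{W_{i_0}}$ is then the required epimorphism $\kappa(\py)\cong W_{i_0}\twoheadrightarrow N$.

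I expect the single delicate point to be the compatibility of the $\kappa(\py)$-vector space structure on $\kappa(\py)^\circ$ with its $R$-module structure, that is, verifying that the summands $W_i$ are genuine $R$-submodules isomorphic to $\kappa(\py)$. Once this is in place, the defining property of simple-$\Gy$-divisibility, namely that every nonzero $\Gy$-divisible submodule of $N$ already equals $N$, closes the argument at once.
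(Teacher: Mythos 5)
Your proof is correct and takes essentially the same route as the paper: via (1.7)(c) and (1.5) you embed $N^\circ \hookrightarrow \kappa(\py)$ with $\py \in \Gy^\ast$, dualize to get $\kappa(\py)^\circ \twoheadrightarrow N^{\circ\circ}\cong N$, and finish using that $\kappa(\py)^\circ \cong \kappa(\py)^{(I)}$ as $R$-modules, so that simple-$\Gy$-divisibility of $N$ forces some summand to map onto $N$. The only difference is one of detail, not of substance: you make explicit the $\kappa(\py)$-vector-space structure on $\kappa(\py)^\circ$ and the summand-image argument, which the paper compresses into the single assertion that $\kappa(\py)^\circ \cong \kappa(\py)^{(I)}$ is $\Gy$-teilbar, whence ``folgt die Behauptung''.
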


\begin{proof}
Weil $N^\circ$ nach (c) einfach-$\Gy$-torsionsfrei ist, gibt es nach (1.5) einen Monomorphismus $N^\circ \hookrightarrow \kappa(\py)$ mit $\py \in \Gy^\ast$, also einen Epimorphismus $\kappa(\py)^\circ \twoheadrightarrow N.$
Weil $\kappa(\py) \;\Gy$-torsionsfrei, also $\kappa(\py)^\circ \cong \kappa(\py) ^{(I)}\;\Gy$-teilbar ist, folgt die Behauptung.
\end{proof}

\begin{remark}
Ohne die Reflexivit"at von $N$ gelten weder die Umkehrungen in (1.7) noch die Aussage in (1.8):
Nach \cite[p.6]{12} gilt f"ur jeden Integrit"atsring $R$, bei dem $\hat{R}$ einen NT $\neq 0$ besitzt, da"s $N=R$ einfach-torsionsfrei,
aber $N^\circ \cong E$ {\bf nicht} einfach-teilbar ist.

Nach \cite[p.9]{12} gibt es einen Ring $R$ mit einem einfach-teilbaren $R$-Modul $N$, so da"s $\py = $ Ann$_R(N) \notin $ Ass$(R)$ ist.
Damit kann $N^\circ$ {\bf nicht} einfach-torsionsfrei sein, aber auch {\bf kein} Epimorphismus $\kappa(\qy) \twoheadrightarrow N$ existieren 
mit $\kappa(\qy)$ einfach-teilbar [denn dann w"are $\qy$ nach (1.4) ein maximales Element von Ass$(R)$ im Widerspruch zu $\qy \subset \py \subsetneq \py_1 \in $ Ass$(R)$].
\end{remark}

\section{Stark-(ko)prime Moduln}
\setcounter{theorem}{0}

Ein einfach-$\Gy$-torsionsfreier $R$-Modul $M$ hat die Eigenschaft, da"s jeder Endomorphismus $0 \neq f: M \to M$ injektiv ist (siehe die Beweise von (1.1 a) oder (1.6 i)),
was wir mit {\bf stark-prim} bezeichnen wollen.
Dual hei"se ein $R$-Modul $N \neq 0$ {\bf stark-koprim}, wenn jeder Endomorphismus $0\neq f: N \to N$ surjektiv ist.
Offenbar hat man die Implikationen
$$ \mbox{einfach-}\Gy\mbox{-torsionsfrei}\;\Longrightarrow \;\mbox{stark-prim}\;\Longrightarrow\;\mbox{prim},$$
$$\mbox{einfach-}\Gy\mbox{-teilbar}\;\Longrightarrow\;\mbox{stark-koprim}\;\Longrightarrow\;\mbox{koprim},$$
und im allgemeinen gelten keine Umkehrungen.

{\bf Beispiel 1}\quad 
$\kappa(\py)=R_\py/\py\,R_\py$ ist als $R$-Modul sowohl stark-prim als auch stark-koprim.

\begin{proof}
Jeder $R$-Endomorphismus $f: \kappa(\py) \to \kappa(\py)$ ist auch $R/\py$-linear, also --- weil $\kappa(\py)$ der Quotientenk"orper von $R/\py$ ist --- Null oder bijektiv.
\end{proof}

\begin{lemma}
$$M \mbox{ prim und uniform} \;\Longrightarrow\; M\;\mbox{stark-prim}.$$
\end{lemma}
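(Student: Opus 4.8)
Der Plan ist, die Behauptung auf den Integrit"atsring $\bar{R}=R/\py$ mit $\py=$ Ann$_R(M)$ zu reduzieren. Da $M$ prim ist, ist $\py$ nach \cite[Lemma 1.1]{12} ein Primideal, und aus der Primeigenschaft folgt Ann$_R(x)=\py$ f"ur jedes $0\neq x\in M$; als $\bar{R}$-Modul ist $M$ somit torsionsfrei. Die Uniformit"at "ubertr"agt sich unmittelbar auf $\bar{R}$, da sie nur vom (gemeinsamen) Untermodulverband abh"angt.

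Zun"achst m"ochte ich aus Uniformit"at und Torsionsfreiheit eine Rang-Eins-Eigenschaft gewinnen: F"ur je zwei Elemente $0\neq x,y\in M$ ist wegen der Uniformit"at $Rx\cap Ry\neq 0$, es gibt also $0\neq z$ mit $z=ax=by$ und geeigneten $a,b\in R$. Weil $\py=$ Ann$_R(M)$ und $z\neq 0$ ist, k"onnen weder $a$ noch $b$ in $\py$ liegen; je zwei nichtverschwindende Elemente von $M$ sind also modulo Skalaren au"serhalb von $\py$ proportional.

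Damit l"asst sich ein beliebiger Endomorphismus $0\neq f: M\to M$ behandeln. Ich w"ahle $x_0$ mit $y_0:=f(x_0)\neq 0$. F"ur jedes $0\neq x\in M$ schreibe ich wie oben $ax=bx_0$ mit $a,b\notin\py$ und wende $f$ an: $a\,f(x)=b\,y_0$. Da $b\notin\py$, $y_0\neq 0$ und $M$ "uber dem Integrit"atsring $\bar{R}$ torsionsfrei ist, folgt $b\,y_0\neq 0$, mithin $f(x)\neq 0$. Also ist Ke$f=0$, d.h. $M$ ist stark-prim.

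Die eigentliche Schwierigkeit sehe ich im zweiten Schritt, n"amlich in der Einsicht, da"s Uniformit"at zusammen mit der Primeigenschaft (Torsionsfreiheit "uber $\bar{R}$) diese Proportionalit"at erzwingt. Entscheidend ist dabei, da"s die Koeffizienten $a,b$ tats"achlich au"serhalb von $\py$ liegen --- genau hier geht die Torsionsfreiheit ein ---, denn nur so bleibt $b\,y_0$ von Null verschieden. Man beachte, da"s dieses Argument $M$ im Grunde als rangeins-torsionsfreien, also in $\kappa(\py)$ einbettbaren Modul erkennt, auf dem jeder Endomorphismus Multiplikation mit einem K"orperelement ist; der Rest ist Routine.
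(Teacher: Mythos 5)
Your proof is correct, and it reaches the conclusion by a more elementary route than the paper. The paper argues structurally: by \cite[Lemma 1.1]{12}, $\py=\mathrm{Ann}_R(M)$ is prime with $\mathrm{Ass}(M)=\{\py\}$; uniformity then identifies the injective hull of $M$ as $E(R/\py)$, and since $\py M=0$ this gives an embedding $M\subset E(R/\py)[\py]\cong\kappa(\py)$, from which the injectivity of every nonzero endomorphism follows as for $\kappa(\py)$ itself (Beispiel 1 in Abschnitt 2). You bypass injective hulls and associated primes entirely: you use primeness only in the form $\mathrm{Ann}_R(x)=\py$ for every $0\neq x\in M$ (torsion-freeness over $R/\py$), extract from uniformity the rank-one relation $ax=bx_0$ with $a,b\notin\py$, and conclude $f(x)\neq 0$ because $by_0\neq 0$; each step checks out, including the key observation that $z=ax=by\neq 0$ forces $a,b\notin\py$. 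Your closing remark is exactly right: your computation is precisely what shows $M$ to be rank-one torsion-free over the domain $R/\py$, hence embeddable in $\kappa(\py)$, so both proofs share the same kernel. What the paper's version buys is brevity and coherence with the rest of the text, where $M\hookrightarrow\kappa(\py)$ is the recurring normal form (cf.\ Satz (1.5)(iii)); what yours buys is self-containedness --- no appeal to $E(R/\py)[\py]\cong\kappa(\py)$ or to $\mathrm{Ass}(M)$, only the definitions of prim and uniform.
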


\begin{proof}
$\py = $ Ann$_R(M)$ ist nach \cite[Lemma 1.1]{12} ein Primideal mit Ass$(M) = \{\py\}$, und aus $M \subset E(R/\py)[\py] \cong \kappa(\py)$ folgt, da"s jeder $R$-Endomorphismus $0 \neq f: M \to M$ injektiv ist.
\end{proof}

{\bf Beispiel 2}\quad
F"ur jeden halbartinschen $R$-Modul $N$ gilt bekanntlich End$_R(N)=$ End$_{\hat{R}}(N)$, so da"s folgt:
Genau dann ist $N$ als $R$-Modul stark-prim (stark-koprim), wenn $N$ als $\hat{R}$-Modul stark-prim (stark-koprim) ist.

{\bf Beispiel 3}\quad
\begin{enumerate}
\item[(a)]
Ein zyklischer $R$-Modul $M$ ist genau dann stark-prim, wenn Ann$_R(M) \in $ Spec$(R)$ ist.
\item[(b)]
Ein kozyklischer $R$-Modul $N$ ist genau dann stark-koprim, wenn Ann$_{\hat{R}}(N) \in $ Spec$(\hat{R})$ ist.
\end{enumerate}

\begin{proof}
\begin{enumerate}
\item[(a)]
Mit $\py=$ Ann$_R(M)$ ist $M \cong R/\py$ prim und uniform, so da"s (2.1) die Behauptung liefert.

\item[(b)]
Nach Beispiel 2 kann man $R=\hat{R}$ annehmen, so da"s $N \subset E$ reflexiv ist, also mit (a) die Behauptung folgt.
\end{enumerate}
\end{proof}

\begin{lemma}
\begin{enumerate}
\item[(a)]
$\;N^\circ$ stark-prim $\;\Longrightarrow\;N$ stark-koprim.
\item[(b)]
$\;N^\circ$ stark-koprim $\;\Longrightarrow\;N$ stark-prim.
\item[(c)]
Ist $N$ reflexiv, gilt beide Male die Umkehrung.
\end{enumerate}
\end{lemma}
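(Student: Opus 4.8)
The plan is to run everything through the fact that Matlis duality $(-)^\circ=\mathrm{Hom}_R(-,E)$ is an exact and faithful contravariant functor. Exactness holds because $E$ is injective, and faithfulness rests on the cogenerator property $X^\circ=0\iff X=0$, valid for \emph{every} $R$-module $X$: if $0\neq x\in X$ then $Rx\cong R/I$ with $I\neq R$ surjects onto $R/\my\hookrightarrow E$ (here $R$ local is essential), and injectivity of $E$ extends this to a nonzero map $X\to E$. The single technical step I would isolate first is the behaviour of an endomorphism $f\colon N\to N$ under dualization. Dualizing the exact sequences $0\to\ker f\to N\stackrel{f}{\to}N$ and $N\stackrel{f}{\to}N\to\mathrm{Cok}\,f\to 0$ gives $N^\circ\stackrel{f^\circ}{\to}N^\circ\to(\ker f)^\circ\to 0$ and $0\to(\mathrm{Cok}\,f)^\circ\to N^\circ\stackrel{f^\circ}{\to}N^\circ$; combined with faithfulness this yields the dictionary $f$ surjective $\iff f^\circ$ injective, $f$ injective $\iff f^\circ$ surjective, and (using $(\mathrm{Im}\,f)^\circ\cong\mathrm{Im}\,f^\circ$) $f\neq 0\iff f^\circ\neq 0$; likewise $N\neq 0\iff N^\circ\neq 0$.

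Granting this dictionary, parts (a) and (b) are immediate and need no reflexivity. For (a) I assume $N^\circ$ stark-prim, so $N^\circ\neq 0$ and hence $N\neq 0$, and take an arbitrary $0\neq f\colon N\to N$. Then $f^\circ$ is a nonzero endomorphism of $N^\circ$, hence injective by hypothesis, so $f$ is surjective; thus $N$ is stark-koprim. Part (b) is symmetric: a nonzero $f\colon N\to N$ gives a nonzero $f^\circ$, which is surjective by the stark-koprim hypothesis, so $f$ is injective and $N$ is stark-prim. The point to stress is that in both cases I only ever push a \emph{given} endomorphism of $N$ forward to $N^\circ$, so I never require that the anti-homomorphism $f\mapsto f^\circ$, $\mathrm{End}_R(N)\to\mathrm{End}_R(N^\circ)$, be surjective.

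For (c) I use $N\cong N^{\circ\circ}$ to feed $N^\circ$ into the implications already proved. Applying (b) with $N^\circ$ in place of $N$ gives ``$N^{\circ\circ}$ stark-koprim $\Rightarrow N^\circ$ stark-prim,'' i.e.\ $N$ stark-koprim $\Rightarrow N^\circ$ stark-prim, the converse of (a); applying (a) with $N^\circ$ in place of $N$ gives ``$N^{\circ\circ}$ stark-prim $\Rightarrow N^\circ$ stark-koprim,'' i.e.\ $N$ stark-prim $\Rightarrow N^\circ$ stark-koprim, the converse of (b).

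The facts I am treating as routine---exactness of $(-)^\circ$ and the cogenerator equivalence $X^\circ=0\iff X=0$---are standard and mirror the ``Vorbemerkung'' already used in the proof of (1.7). Accordingly, I do not expect a genuine obstacle; the only conceptual subtlety is precisely the non-surjectivity of $f\mapsto f^\circ$ on endomorphism rings, which is what forces the converses to appear only in (c), where reflexivity supplies the missing direction of the correspondence.
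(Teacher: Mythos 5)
Your proposal is correct and takes essentially the same route as the paper: the paper's proof of (a) is precisely your argument ($0\neq f\colon N\to N$ gives $0\neq f^\circ\colon N^\circ\to N^\circ$, injective by hypothesis, hence $f$ surjective), and it dispatches (b) and (c) with ``entsprechend'' plus a reference to \cite[Lemma 3.2]{12}. You merely make explicit what the paper leaves implicit --- the exactness/faithfulness dictionary for $(-)^\circ$ and the bootstrap via $N\cong N^{\circ\circ}$ in (c) --- and these details are all sound.
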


\begin{proof}
Bei (a) folgt aus $0 \neq f: N \to N$, da"s $0\neq f^\circ: N^\circ \to N^\circ$ ist,
also nach Voraussetzung $f^\circ$ injektiv, $f$ surjektiv.
Entsprechend (b) und (c) (siehe auch \cite[Lemma 3.2]{12}).
\end{proof}

\begin{remark}
Ohne ''reflexiv'' gelten keine Umkehrungen:
Ist $R$ ein Integrit"atsring mit Quotientenk"orper $K$ und $\dim(R) \geq 2,$ so ist $K$ nach Beispiel 1 stark-prim und stark-koprim, aber $K^\circ \cong K^{(I)}$ mit $|I| \geq 2$ direkt zerlegbar.
\end{remark}

\begin{lemma}
$N=E(R/\qy)$ als $R$-Modul stark-koprim $\;\Longleftrightarrow\;\, R_\qy$ als Ring analytisch irreduzibel.
\end{lemma}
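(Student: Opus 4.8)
Der Plan ist, den Endomorphismenring von $E=E(R/\qy)$ explizit zu bestimmen und die Surjektivit"at der einzelnen Endomorphismen "uber die Matlis-Dualit"at in eine Nichtnullteiler-Bedingung zu "ubersetzen. Zun"achst ist $E=E_R(R/\qy)$ in nat"urlicher Weise ein $R_\qy$-Modul, n"amlich die injektive H"ulle $E_{R_\qy}(\kappa(\qy))$ des Restklassenk"orpers, und jeder $R$-lineare Endomorphismus von $E$ ist automatisch $R_\qy$-linear. "Uber dem noetherschen lokalen Ring $R_\qy$ liefert die Matlis-Theorie daher End$_R(E)=\,$End$_{R_\qy}(E)\cong \widehat{R_\qy}$, wobei $\widehat{R_\qy}$ die Komplettierung von $R_\qy$ bezeichnet; insbesondere ist jeder Endomorphismus von $E$ die Multiplikation mit einem eindeutig bestimmten $a\in \widehat{R_\qy}$.

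Im zweiten Schritt arbeite ich "uber dem vollst"andigen lokalen Ring $S=\widehat{R_\qy}$. Dort ist $E$ die injektive H"ulle des Restklassenk"orpers, also $E\cong D_S(S)$ f"ur die exakte und treue Matlis-Dualit"at $D_S=\,$Hom$_S(-,E)$, und $S$ ist reflexiv. Die Multiplikation mit $a$ auf $E\cong D_S(S)$ ist dual zur Multiplikation mit $a$ auf $S$; da $D_S$ exakt ist, ist erstere genau dann surjektiv, wenn letztere injektiv ist, d.h. wenn $a$ ein Nichtnullteiler von $S$ ist.

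Setzt man beides zusammen, so ist $E$ genau dann stark-koprim, wenn jedes $0\neq a\in S$ ein Nichtnullteiler und damit $S=\widehat{R_\qy}$ ein Integrit"atsring ist, und das bedeutet gerade, da"s $R_\qy$ analytisch irreduzibel ist. Die Hauptschwierigkeit sehe ich in der sauberen Begr"undung der Isomorphie End$_R(E)\cong \widehat{R_\qy}$ (Reduktion auf die Lokalisierung und Matlis-Theorie) sowie darin, die Reflexivit"at von $S$ und die Dualit"at von $D_S$ pr"azise auf die Multiplikation mit $a$ anzuwenden.
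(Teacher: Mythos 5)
Your proposal is correct and takes essentially the same route as the paper: the paper's proof likewise reduces to $R_\qy$ via $\mathrm{End}_R(N)=\mathrm{End}_{R_\qy}(N)$ (with the same bijectivity argument for the action of $s\in R\setminus\qy$ on $E$) and then cites its Beispiel~3(b) for kozyklische Moduln, whose content is precisely your identification $\mathrm{End}_{R_\qy}(E)\cong\widehat{R_\qy}$ together with the Matlis-duality exchange of surjectivity of multiplication by $a$ on $E$ with $a$ being a Nichtnullteiler in $\widehat{R_\qy}$. You have merely inlined the cited auxiliary results (Beispiel~2, Beispiel~3 und Lemma~2.2) rather than quoting them; the underlying argument is the same.
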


\begin{proof}
Auch jetzt ist End$_R(N) = $ End$_{R_\qy}(N)$ [denn alle $s \in R\backslash \qy$ operieren auf $N$ bijektiv via
$\frac{r}{s} \cdot x =ry$ mit $x=sy$]
und $N$ ist als $R_\qy$-Modul die injektive H"ulle von $\kappa(\qy)$, so da"s Beispiel 3 die Behauptung liefert.
\end{proof}

\begin{remark}
Wir wissen nicht, wann $N=E(R/\qy)$ als $R$-Modul einfach-teilbar ist (siehe \cite[p.7]{12}).
Ist aber $\Gy$ eine Gabriel-Topologie auf $R$ und $\qy \notin \Gy$, folgt  mit (1.4) sofort:
Genau dann ist $N=E(R/\qy)$ als $R$-Modul einfach-$\Gy$-teilbar, wenn $R_\qy$ ein K"orper ist und $\qy \in \Gy^\ast$.
\end{remark}

\section{Einfach-$\Gy$-teilbare Moduln}
\setcounter{theorem}{0}

Die explizite Beschreibung aller einfach-$\Gy$-torsionsfreien $R$-Moduln $M$ in (1.5) l"a"st sich nicht ohne weiteres f"ur einfach-$\Gy$-teilbare $R$-Moduln $N$ dualisieren:
$N$ mu"s nicht kouniform sein, es kann Ann$_R(N) \notin \Gy^\ast$ sein, und es mu"s keinen Epimorphismus $\kappa(\py) \twoheadrightarrow N$ geben mit $\py \in \Gy^\ast$.
Nur wenn $N$ komplementiert ist (3.1) oder sogar artinsch (3.3), erh"alt man teilweise duale Resultate.

\begin{lemma}
Ist $N$ einfach-$\Gy$-teilbar und {\bf komplementiert}, so gilt:
\begin{enumerate}
\item[(a)]
$\;N$ ist kouniform.
\item[(b)]
Zu jedem artinschen Untermodul $V\subsetneq N$ gibt es ein $\ay_0 \in \Gy$ mit $\ay_0V=0$ und $N/V\twoheadrightarrow N$.
\item[(c)]
Ist sogar {\bf jeder} Untermodul von $N$ komplementiert, sind alle Faktormoduln $\neq 0$ wieder einfach-$\Gy$-teilbar.
\end{enumerate}
\end{lemma}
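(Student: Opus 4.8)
The plan rests on the reformulation that a module $X$ is $\Gy$-teilbar if and only if $\ay X = X$ for every $\ay \in \Gy$ (if $\ay X \subsetneq X$, each $\py \in \mathrm{Koass}(X/\ay X) \subseteq \mathrm{Koass}(X)$ contains $\ay$, hence lies in $\Gy$), together with two standard facts on komplementierte Moduln: a supplement $V^{\ast}$ of $U$ satisfies $U \cap V^{\ast} \ll V^{\ast}$, and $K \ll M$ with $L + K = M$ forces $L = M$. Two consequences are used constantly: every factor module of $N$ stays $\Gy$-teilbar, while every proper nonzero $W \subsetneq N$ admits some $\ay \in \Gy$ with $\ay W \subsetneq W$. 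For (a) I assume $N$ is not kouniform; then some proper $V$ is not small and, $N$ being komplementiert, has a supplement $V^{\ast}$ with $V + V^{\ast} = N$ and $V \cap V^{\ast} \ll V^{\ast}$. Here $V^{\ast} \neq N$ (otherwise $V = V \cap V^{\ast} \ll N$) and $V^{\ast} \neq 0$; being proper and nonzero, $V^{\ast}$ is not $\Gy$-teilbar, so $\ay V^{\ast} \subsetneq V^{\ast}$ for some $\ay \in \Gy$. The cancellation fact gives $\ay V^{\ast} + (V \cap V^{\ast}) \subsetneq V^{\ast}$, and under $V^{\ast}/(V \cap V^{\ast}) \cong (V + V^{\ast})/V = N/V$ this reads $\ay\,(N/V) \subsetneq N/V$, contradicting that the factor module $N/V$ is $\Gy$-teilbar. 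Hence every proper submodule is small, i.e.\ $N$ is kouniform.

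For (b) I first produce $\ay_0$. If $V = 0$ take $\ay_0 = R$; otherwise build a chain $V \supsetneq \ay_1 V \supsetneq \ay_2\ay_1 V \supsetneq \cdots$ by applying non-divisibility to each nonzero proper submodule $\ay_i \cdots \ay_1 V$ to obtain $\ay_{i+1} \in \Gy$ that strictly shrinks it. Since $V$ is artinian the chain reaches $0$ after finitely many steps, and as $\Gy$ is closed under products the product $\ay_0 = \ay_m \cdots \ay_1 \in \Gy$ satisfies $\ay_0 V = 0$. For the epimorphism, write $\ay_0 = (s_1,\dots,s_k)$; then $s_1 N + \cdots + s_k N = \ay_0 N = N$ because $N$ is $\Gy$-teilbar, and since $N$ is kouniform by (a) a finite sum of proper submodules is proper, so $sN = N$ for some $s := s_i$. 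As $s \in \ay_0$ we have $sV = 0$, so $V \subseteq N[s]$ and $\bar{s}\colon N/V \to N$, $\bar{x} \mapsto sx$, is a well-defined surjection, giving $N/V \twoheadrightarrow N$.

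For (c), let $U \subsetneq V \subsetneq N$ with $V/U$ a proper nonzero submodule of $N/U$; since $N/U$ is $\Gy$-teilbar as a factor of $N$, it suffices to show $V/U$ is not $\Gy$-teilbar. Now $V$ is itself komplementiert, so $U$ has a supplement $U^{\ast} \subseteq V$ with $U \cap U^{\ast} \ll U^{\ast}$ and $U^{\ast} \neq 0$ (else $U = V$); moreover $V/U \cong U^{\ast}/(U \cap U^{\ast})$. As $U^{\ast} \subsetneq N$ is proper and nonzero, it is not $\Gy$-teilbar, giving $\ay \in \Gy$ with $\ay U^{\ast} \subsetneq U^{\ast}$, whence the same cancellation as in (a) yields $\ay U^{\ast} + (U \cap U^{\ast}) \subsetneq U^{\ast}$, i.e.\ $\ay\,(V/U) \subsetneq V/U$. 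Thus $V/U$ is not $\Gy$-teilbar, and $N/U$ is einfach-$\Gy$-teilbar.

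The genuinely new ingredient, and the step I expect to be the main obstacle, is the construction of the epimorphism in (b): passing from ``$\ay_0 N = N$, $\ay_0 V = 0$'' to an actual surjection $N/V \twoheadrightarrow N$. Working with the ideal $\ay_0$ directly is awkward; the clean route is to establish (a) first and then exploit kouniformity to replace $\ay_0$ by a single element $s$ acting surjectively on $N$. Everything else is the supplement-plus-cancellation mechanism, reused in all three parts; the only other care needed is the artinian descending-chain argument and the closure of $\Gy$ under products, both of which are routine.
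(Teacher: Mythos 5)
Your proof is correct and takes essentially the same route as the paper: supplements plus the smallness cancellation $U \cap W \ll W$ in (a) and (c), and in (b) the artinian descending chain condition with closure of $\Gy$ under products to get $\ay_0 V = 0$, followed by kouniformity and finite generation of $\ay_0$ to extract a single element $s$ with $sN = N$ and $sV = 0$, yielding $N/V \twoheadrightarrow N/N[s] \cong N$. The only differences are cosmetic: you argue (a) and (c) contrapositively where the paper argues directly (minimality of the supplement $W$ gives $\ay W = W$ for all $\ay \in \Gy$, hence $W$ $\Gy$-teilbar and $W = N$), and in (b) you iterate strict shrinking instead of choosing a minimal element of $\{\ay V \mid \ay \in \Gy\}$.
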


\begin{proof}
\begin{enumerate}
\item[(a)]
Jeder Untermodul $V\subsetneq N$ ist klein in $N$, denn f"ur ein Komplement $W$ von $V$ in $N$ gilt $\ay W+V=N,\;\ay W=W$ f"ur alle $\ay \in \Gy$, d.h. $W$ ist $\Gy$-teilbar, $W=N$.
\item[(b)]
Die Menge $\{\ay V|\;\ay \in \Gy\}$ hat ein minimales Element $\ay_0 V$, f"ur alle $\by \in \Gy$ gilt dann in $\by\ay_0V \subset \ay_0V$ Gleichheit, so da"s $\ay_0V\;\Gy$-teilbar, also nach Voraussetzung Null ist.
Nach (a) ist aber $N=\ay_0N$ kouniform, $N=r_0N$ f"ur ein $r_0 \in \ay_0,\;V \subset N[\ay_0]\subset N[r_0]$, also $N/V \twoheadrightarrow N/N[r_0] \cong N$.
\item[(c)]
Sei $N/V \neq 0.$ Klar ist $N/V\;\Gy$-teilbar, und g"abe es einen $\Gy$-teilbaren Untermodul $0\neq W/V \subsetneq N/V$,
folgte f"ur ein Komplement $W'$ von $V$ in $W$, da"s auch $W'\;\Gy$-teilbar w"are entgegen der Voraussetzung.
\end{enumerate}
\end{proof}

\begin{remark}
F"ur jeden einfach-$\Gy$-torsionsfreien $R$-Modul $M$ gilt nat"urlich ohne Zusatzbedingungen:
Ist $0 \neq U \subset M$ und $M/U$ endlich erzeugt, so gibt es ein $\ay_0 \in \Gy$ mit $\ay_0 \cdot M/U =0$ und $M\hookrightarrow U$.
\end{remark}

\begin{corollary}
Ist $N$ einfach-$\Gy$-teilbar und {\bf artinsch}, so ist $N$ epi-"aquivalent zu genau einem einfach-$\Gy$-teilbaren Untermodul $N_1 \subset E$.
\end{corollary}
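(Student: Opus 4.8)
The plan is to dualise the classification of simple-$\Gy$-torsion-free modules from (1.5) by Matlis duality, using that an Artinian module is reflexive. I would regard $N$ as a module over the completion $\hat{R}$ and set $N^\circ = \mbox{Hom}_R(N,E) = \mbox{Hom}_{\hat{R}}(N,E)$; since $N$ is Artinian, $N^\circ$ is a finitely generated $\hat{R}$-module and $N \cong N^{\circ\circ}$. First I would read off from (1.7c) that $N^\circ$ is einfach-$\Gy$-torsionsfrei, hence by (1.1) prim and uniform with $\py := \mbox{Ann}_{\hat{R}}(N^\circ) = \mbox{Ann}_{\hat{R}}(N) \in \Gy^\ast$ a prime of $\hat{R}$. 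As $\py N^\circ = 0$, I would view $N^\circ$ over the domain $\bar{R} = \hat{R}/\py$, where by (1.3a) it is torsion-free; being uniform and nonzero it then has rank one, so $N^\circ \cong I$ for some nonzero ideal $I \subseteq \bar{R}$.

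For existence I would take $N_1 := E[\py] = (\hat{R}/\py)^\circ \subseteq E$ as the candidate. Its dual is the cyclic module $\hat{R}/\py$ with annihilator $\py \in \Gy^\ast$, so by the Beispiel 1 following (1.5) it is einfach-$\Gy$-torsionsfrei and hence, by (1.7a), $N_1$ is einfach-$\Gy$-teilbar. To realise the epi-equivalence I would build monomorphisms in both directions between the duals: the inclusion $N^\circ \cong I \hookrightarrow \bar{R} = N_1^\circ$, and the map $N_1^\circ = \bar{R} \hookrightarrow I \cong N^\circ$, $1 \mapsto c$, for any $0 \neq c \in I$ (injective since $\bar{R}$ is a domain). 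Because $\mbox{Hom}_R(-,E)$ is exact, dualising these yields epimorphisms $N_1 \twoheadrightarrow N$ and $N \twoheadrightarrow N_1$, so $N$ is epi-äquivalent to the submodule $N_1 \subseteq E$.

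For uniqueness I would take any einfach-$\Gy$-teilbaren submodule $N_1' \subseteq E$ epi-äquivalent to $N$ and write $N_1' = E[J]$ with $J = \mbox{Ann}_{\hat{R}}(N_1')$. Dualising the two epimorphisms gives monomorphisms $N^\circ \hookrightarrow \hat{R}/J$ and $\hat{R}/J \hookrightarrow N^\circ$; comparing annihilators forces $J \subseteq \py$ and $\py \subseteq J$, so $J = \py$ and $N_1' = E[\py] = N_1$.

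The step I expect to be the main obstacle is not this algebra but the passage between $R$ and $\hat{R}$: I have applied (1.1), (1.3), (1.5) and the Beispiel 1 over the completion, which requires that $\Gy$ induce a Gabriel topology on $\hat{R}$ with the same $\Gy^\ast$, and that $\Gy$-torsion-freeness and $\Gy$-divisibility of the Artinian $N$ and of the finitely generated $N^\circ$ be unaffected by the change of ring; making this reduction precise (equivalently, reducing at once to $R = \hat{R}$) is the delicate point. Once that is settled, the only real subtlety is the ``backward'' epimorphism $N \twoheadrightarrow N_1$, available because over a domain a rank-one torsion-free module admits monomorphisms both to and from the ring --- and I note that $N$ and $N_1$ need not be isomorphic (their socles can have different length), which is consistent because a surjective endomorphism of an Artinian module of infinite length need not be injective.
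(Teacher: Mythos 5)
Your route is genuinely different from the paper's, and in outline it can be made to work --- but the step you yourself flag as delicate is a real gap, and its resolution is not the one you hope for. The paper proves the corollary in three lines, entirely over $R$ and without any duality: since $N$ is Artinian there are (bekanntlich) submodules $V \subsetneq N$ und $N_1 \subset E$ mit $N/V \cong N_1$; because every submodule of an Artinian module is complemented, (3.1c) shows $N_1$ is again einfach-$\Gy$-teilbar, and (3.1b) supplies the backward surjection $N_1 \cong N/V \twoheadrightarrow N$ (via $\ay_0 V=0$, $N=r_0N$, $V \subset N[r_0]$, $N/N[r_0]\cong N$); uniqueness is quoted from \cite[Lemma 4.1]{12}. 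Your rank-one-ideal construction of the two epimorphisms is a correct substitute for (3.1b) and in fact proves more: it identifies $N_1$ concretely as $\mbox{Ann}_E(P)$ with $P=\mbox{Ann}_{\hat{R}}(N)$, i.e.\ it essentially re-derives Satz (3.5), which the paper establishes separately by Koass-computations via (3.4). That is the trade: the paper's argument is short and stays over $R$; yours needs the completion but yields the classification of (3.5) along the way.

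The gap is that none of (1.7c), (1.1), (1.3), (1.5) may be applied the way you apply them without first carrying out the change of rings, and the induced topology does \emph{not} have ``the same $\Gy^\ast$''. First, (1.7c) requires $N$ reflexive with respect to $\mbox{Hom}_R(-,E)$, and an Artinian module over a non-complete $R$ need not be Matlis-reflexive (e.g.\ $E$ itself when $R \neq \hat{R}$; note the paper, in Beispiel 3(b) of Abschnitt 2, deliberately reduces to $R=\hat{R}$ \emph{before} invoking reflexivity of $N \subset E$). Indeed an $R$-level application of (1.7c) and (1.1) would give $\mbox{Ann}_R(N) \in \Gy^\ast$, which Bemerkung (3.6) shows to be false in general. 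So everything must be run over $(\hat{R},\Gy')$ with $\Gy'=\{A \subseteq \hat{R}\,|\,A \cap R \in \Gy\}$: this is a Gabriel topology on $\hat{R}$; for Artinian modules the $R$- and $\hat{R}$-submodule lattices and Hom-sets coincide (Beispiel 2 in Abschnitt 2) and $\ay X=(\ay\hat{R})X$, so einfach-$\Gy$-teilbar over $R$ equals einfach-$\Gy'$-teilbar over $\hat{R}$; and since (1.1)--(1.7) hold for any Noetherian local ring with any Gabriel topology, they apply verbatim to $(\hat{R},\Gy')$. But then your conclusion reads $\py=\mbox{Ann}_{\hat{R}}(N) \in (\Gy')^\ast$, i.e.\ $\py$ maximal among $P \in \mbox{Spec}(\hat{R})$ with $P\cap R \notin \Gy$ --- exactly condition (ii) of (3.5) --- whereas your ``$\py \in \Gy^\ast$'' is a type error that is not merely notational: by (3.6), the trace $P \cap R$ of such a maximal $P$ need not lie in $\Gy^\ast$ (it does under ``going up''). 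With $(\Gy')^\ast$ in place of $\Gy^\ast$ throughout, the rest of your argument (uniform torsion-free of rank one, $N_1:=E[\py]$, dualizing $I \hookrightarrow \hat{R}/\py$ and $\hat{R}/\py \hookrightarrow I$, the annihilator comparison for uniqueness using that every submodule of $E$ equals $E[J]$ for $J$ its $\hat{R}$-annihilator) is correct; the duals of your $R$-linear epimorphisms are automatically $\hat{R}$-linear for the same semiartinian reason.
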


\begin{proof}
Bekanntlich gibt es Untermoduln $V \subsetneq N$ und $N_1 \subset E$ mit $N/V \cong N_1$, nach (c) ist $N_1$ wieder einfach-$\Gy$-teilbar, und nach (b) $N_1\twoheadrightarrow N \twoheadrightarrow N_1.$
Die Eindeutigkeit folgt wie in \cite[Lemma 4.1]{12}.
\end{proof}

\begin{lemma}
F"ur einen Untermodul $0\neq N \subset E$ sind "aquivalent:
\begin{enumerate}
\item[(i)]
$N$ ist $\Gy$-teilbar.
\item[(ii)]
F"ur jeden Primdivisor $P$ von Ann$_{\hat{R}}(N)$ in $\hat{R}$ gilt $P \cap R \notin \Gy$.
\end{enumerate}
\end{lemma}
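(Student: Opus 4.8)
The plan is to pass to the Matlis dual $N^\circ$, to translate the $\Gy$-Teilbarkeit of $N$ into a vanishing condition on Ass$_R(N^\circ)$, and then to contract associated primes from $\hat{R}$ down to $R$. The set $\Gy^\ast$ plays no role here; what is needed is only the dictionary between $N\subset E$ and the cyclic $\hat{R}$-module $N^\circ$.

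First I would use that $N$, being a submodule of the Artinian module $E$, is itself Artinian and hence carries a natural $\hat{R}$-module structure; consequently $N^\circ=$ Hom$_R(N,E)=$ Hom$_{\hat{R}}(N,E)$ is a finitely generated $\hat{R}$-module. Dualizing the inclusion $N\hookrightarrow E$ with the exact functor Hom$_{\hat{R}}(-,E)$ and using $E^\circ\cong\hat{R}$, I obtain a surjection $\hat{R}\twoheadrightarrow N^\circ$ whose kernel is exactly $\{r\in\hat{R}\mid rN=0\}=$ Ann$_{\hat{R}}(N)$. Thus $N^\circ\cong\hat{R}/\mbox{Ann}_{\hat{R}}(N)$ is cyclic, so that Ass$_{\hat{R}}(N^\circ)$ is precisely the set of prime divisors $P$ of Ann$_{\hat{R}}(N)$ in $\hat{R}$.

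Next I would invoke the Vorbemerkung to the proof of (1.7) (i.e.\ \cite[Hilfssatz 2.2]{11}): for every $R$-Modul, $N$ is $\Gy$-teilbar if and only if $N^\circ$ is $\Gy$-torsionsfrei, that is, if and only if Ass$_R(N^\circ)\cap\Gy=\emptyset$. It then remains to compute the $R$-associated primes of $N^\circ$ from the $\hat{R}$-associated ones, for which I would use the contraction formula Ass$_R(W)=\{\,P\cap R\mid P\in\mbox{Ass}_{\hat{R}}(W)\,\}$, valid for any finitely generated $\hat{R}$-module $W$. For ``$\supseteq$'' it suffices that an element $w$ of $W$ with Ann$_{\hat{R}}(w)=P$ has Ann$_R(w)=P\cap R$; for ``$\subseteq$'', given $w$ with Ann$_R(w)=\py$ one passes to the cyclic submodule $\hat{R}w\cong\hat{R}/\mbox{Ann}_{\hat{R}}(w)$ and observes that some associated prime of it must contract to $\py$, since otherwise a product $s$ of elements of $R\setminus\py$, one chosen from each of its finitely many associated primes, would lie in every associated prime of $\hat{R}w$ and hence satisfy $s^k\,(\hat{R}w)=0$ for some $k$, giving $s^k\in\mbox{Ann}_R(w)=\py$ and the contradiction $s\in\py$.

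Combining the three steps, $N$ is $\Gy$-teilbar if and only if $P\cap R\notin\Gy$ for every $P\in\mbox{Ass}_{\hat{R}}(N^\circ)$, which by the first step is exactly condition (ii). The step carrying the real weight is the comparison of associated primes over $R$ and over $\hat{R}$; it is essential that $N$ is Artinian, so that $N^\circ$ is finitely generated over $\hat{R}$ and the contraction formula applies, for without finite generation the equality of the two associated-prime sets can break down.
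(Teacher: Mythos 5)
Your proof is correct, and it does reach the paper's statement by a genuinely rearranged route: the paper never leaves the side of coassociated primes, arguing that $N$ is $\Gy$-teilbar iff Koass$_R(N)\cap \Gy=\emptyset$ (the definition), then contracting via \cite[Satz 2.9]{10} to get Koass$_R(N)=\{P\cap R\,|\,P \in$ Koass$_{\hat{R}}(N)\}$, and only at the end invoking Matlis duality over the complete ring to identify Koass$_{\hat{R}}(N)$ with Ass$_{\hat{R}}(\hat{R}/$Ann$_{\hat{R}}(N))$, i.e.\ with the prime divisors of Ann$_{\hat{R}}(N)$. You dualize first instead: via \cite[Hilfssatz 2.2]{11} (the Vorbemerkung to (1.7)) you translate $\Gy$-Teilbarkeit of $N$ into $\Gy$-Torsionsfreiheit of $N^\circ$, identify $N^\circ \cong \hat{R}/$Ann$_{\hat{R}}(N)$ by dualizing $N \hookrightarrow E$ over $\hat{R}$ --- which legitimately uses Hom$_R(N,E)=$ Hom$_{\hat{R}}(N,E)$ for the Artinian module $N$, cf.\ Beispiel 2 in Abschnitt 2 --- and then contract \emph{associated} primes from $\hat{R}$ to $R$ by a self-contained argument. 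What your order of operations buys is the replacement of the nontrivial Koass-contraction \cite[Satz 2.9]{10} by an elementary fact about Ass (trivial inclusion one way; cyclic submodule plus $s^k\in \sqrt{\mbox{Ann}_{\hat{R}}(\hat{R}w)}$ the other way), at the modest cost of citing the duality dictionary of \cite{11}, which the paper quotes anyway in (1.7). In substance the two proofs are the same computation viewed from the two sides of Matlis duality.

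One side remark in your last sentence is inaccurate, though harmless: finite generation of $N^\circ$ over $\hat{R}$ is \emph{not} essential for your contraction formula. Your own argument for ''$\subseteq$'' only ever uses the cyclic submodule $\hat{R}w$, which is Noetherian regardless of $W$, so Ass$_R(W)=\{P \cap R\,|\,P \in$ Ass$_{\hat{R}}(W)\}$ holds for every $\hat{R}$-module $W$ (it suffices that $\hat{R}$ is Noetherian). The contraction that genuinely requires care is the one for coassociated primes --- precisely the step the paper outsources to \cite[Satz 2.9]{10}. This misattribution does not affect the validity of your proof.
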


\begin{proof}
Ein beliebiger $R$-Modul $X$ ist genau dann $\Gy$-teilbar, wenn Koass$_R(X) \cap \Gy =\emptyset $ ist.
In unserem Fall is Koass$_R(N)=\{P \cap R\,|\,P \in $ Koass$_{\hat{R}}(N)\}$ \cite[Satz 2.9]{10} und Koass$_{\hat{R}}(N)=$ Ass$_{\hat{R}}(\hat{R} /$Ann$_{\hat{R}}(N))$ (Matlis-Dualit"at), so da"s die Behauptung folgt.
\end{proof}

Der folgende Satz, zusammen mit (3.3), beschreibt alle einfach-$\Gy$-teilbaren, artinschen $R$-Moduln $N$:

\begin{theorem}
F"ur einen Untermodul $0\neq N \subset E$ sind "aquivalent:
\begin{enumerate}
\item[(i)]
$\;N$ ist einfach-$\Gy$-teilbar.
\item[(ii)]
$\;$ Ann$_{\hat{R}}(N)$ ist ein maximales Element in der Menge
$$\{ P \in \mbox{ Spec}(\hat{R})\,|\;P \cap R \notin \Gy\}.$$
Gen"ugt $R\subset \hat{R}$ zus"atzlich der Bedingung ''going up'', so ist (ii) weiter "aquivalent mit
\item[(iii)]
$\;N$ ist stark-koprim und Ann$_R(N) \in \Gy^\ast.$
\end{enumerate}
\end{theorem}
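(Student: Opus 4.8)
The plan is to push everything through Matlis duality over $\hat{R}$ and reduce the three conditions to properties of the single ideal $I=\mbox{Ann}_{\hat{R}}(N)$. Since $0\neq N\subset E$ and $E$ is Artinian with simple socle $R/\my$, such an $N$ is Artinian, cocyclic and reflexive, and the duality identifies the submodules $V\subseteq N$ with the ideals $J\supseteq I$ of $\hat{R}$ via $V=N[J]$ and $J=\mbox{Ann}_{\hat{R}}(V)$; the proper nonzero $V$ correspond to $I\subsetneq J\subsetneq\hat{R}$.

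First I would settle $(i)\Leftrightarrow(ii)$, which needs no extra hypothesis. By (3.4) applied to each $V=N[J]$, this $V$ is $\Gy$-divisible iff every $P\in\mbox{Ass}_{\hat{R}}(\hat{R}/J)$ satisfies $P\cap R\notin\Gy$; call such a $J$ \emph{good}. Thus $N$ is simple-$\Gy$-divisible iff $I$ is good while no $J$ with $I\subsetneq J\subsetneq\hat{R}$ is good. If $I$ is good, each associated prime $P$ of $\hat{R}/I$ is itself a good ideal containing $I$, so the ``no larger good ideal'' condition forces $P=I$; hence $\mbox{Ass}_{\hat{R}}(\hat{R}/I)=\{I\}$, $I$ is prime, and $I\cap R\notin\Gy$, i.e. $I\in\mathcal{M}:=\{P\in\mbox{Spec}(\hat{R})\mid P\cap R\notin\Gy\}$. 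Any prime $P\supsetneq I$ in $\mathcal{M}$ would be a good ideal strictly above $I$, so $I$ is maximal in $\mathcal{M}$. Conversely, if $I$ is prime and maximal in $\mathcal{M}$, then $\mbox{Ass}_{\hat{R}}(\hat{R}/I)=\{I\}$ shows $I$ good, while a good $J\supsetneq I$ would have an associated prime in $\mathcal{M}$ properly above $I$, against maximality.

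For the equivalence with $(iii)$ under going up I would first note, via Beispiel 3(b), that the cocyclic $N$ is strongly-coprime iff $I$ is prime; so under either $(ii)$ or $(iii)$ the ideal $I$ is prime and $\mbox{Ann}_R(N)=I\cap R=:\qy$. The implication $(ii)\Rightarrow(iii)$ is then the clean use of going up: $I$ prime gives strong-coprimeness, and if some $\qy'\supsetneq\qy$ lay in $\mbox{Spec}(R)\setminus\Gy$, going up would lift $I$ to a prime $P\supsetneq I$ over $\qy'$, i.e. $P\in\mathcal{M}$ with $P\supsetneq I$, contradicting $(ii)$; hence $\qy\in\Gy^\ast$.

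The main obstacle is $(iii)\Rightarrow(ii)$. Here $I$ is prime and $\qy=I\cap R\in\Gy^\ast$, and I must show $I$ is maximal in $\mathcal{M}$. Given a prime $P\supsetneq I$ with $P\cap R\notin\Gy$, maximality of $\qy$ in $\mbox{Spec}(R)\setminus\Gy$ forces $P\cap R=\qy$; the whole difficulty is to exclude such a $P$, that is, to show $I$ is maximal in the fibre of $\mbox{Spec}(\hat{R})\to\mbox{Spec}(R)$ over $\qy$, equivalently $\mbox{ht}(I)=\mbox{ht}(\qy)$. This is exactly where going up must be used: lifting a saturated chain $\qy\subsetneq\cdots\subsetneq\my$ of length $\dim(R/\qy)$ through $I$ produces a chain in $\hat{R}$ of the same length ending at $\my\hat{R}$, whence $\mbox{ht}(I)+\dim(R/\qy)\le\dim(\hat{R})=\dim(R)$; together with the flat dimension formula $\mbox{ht}(I)=\mbox{ht}(\qy)+(\mbox{fibre dimension at }I)$, a prime strictly above $I$ in the fibre would force $\mbox{ht}(\qy)+\dim(R/\qy)<\dim(R)$. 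The delicate point I expect to have to pin down is therefore that going up for $R\hookrightarrow\hat{R}$ secures the dimension equality $\mbox{ht}(\qy)+\dim(R/\qy)=\dim(R)$, so that the fibre at $I$ collapses to a point and $I$ becomes maximal in $\mathcal{M}$. A more self-contained variant of this direction is to pass to $S=R/\qy$: by (1.3)(b) it suffices that $N$ be simple-divisible over the domain $S$, and by the already-proved $(i)\Leftrightarrow(ii)$ applied to $S$ with its Gabriel topology of regular ideals this again reduces to $\mbox{Ann}_{\hat{S}}(N)$ being maximal in the generic formal fibre of $S$ — the same fibre-maximality statement, now to be extracted from going up for $S\hookrightarrow\hat{S}$.
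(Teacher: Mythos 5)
Your treatment of $(i)\Leftrightarrow(ii)$ and of $(ii)\Rightarrow(iii)$ is correct and is essentially the paper's own argument: the dictionary between submodules of $N$ and ideals $J\supseteq I=\mbox{Ann}_{\hat{R}}(N)$, the criterion (3.4) applied to each $V=N[J]$, Beispiel 3(b) for primeness of the annihilator of a cocyclic strongly-coprime module, and the same going-up lifting for $(ii)\Rightarrow(iii)$. Your derivation of primeness of $I$ from $\mbox{Ass}_{\hat{R}}(\hat{R}/I)=\{I\}$ is a harmless repackaging of the paper's appeal to ``stark-koprim''.

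The genuine gap is in $(iii)\Rightarrow(ii)$, and it is more than the loose end you flag: the route you propose cannot be completed, because going up does \emph{not} imply the dimension equality $\mbox{ht}(\qy)+\dim(R/\qy)=\dim(R)$ to which you reduce the problem. Take any complete non-equidimensional ring, e.g.\ $R=k[[x,y,z]]/(xz,yz)$ with $\qy=(x,y)$: here $R=\hat{R}$, so going up holds trivially, yet $\mbox{ht}(\qy)+\dim(R/\qy)=0+1<2=\dim(R)$. In such a situation your inequality $\mbox{ht}(\qy)+1+\dim(R/\qy)\le\dim(R)$ yields no contradiction, even though the desired conclusion (no prime strictly above $I$ in the fibre over $\qy$) is trivially true there --- so height bookkeeping is the wrong invariant. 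What the paper invokes, namely (INC) for $R\subseteq\hat{R}$, does follow from going up, but by a coheight argument, and this is the missing idea: suppose $I\subsetneq Q$ are primes of $\hat{R}$ with $I\cap R=Q\cap R=\qy$, put $d=\dim(R/\qy)$, choose a chain $\qy=\qy_0\subsetneq\qy_1\subsetneq\cdots\subsetneq\qy_d$ in $R$, and lift it by going up starting at $Q$ to $Q=Q_0\subsetneq Q_1\subsetneq\cdots\subsetneq Q_d$ (the inclusions are strict since the contractions differ). Together with $I$ this is a strict chain of length $d+1$ of primes all containing $\qy\hat{R}$ (because $\qy\subseteq I$), i.e.\ a chain in $\hat{R}/\qy\hat{R}\cong\widehat{R/\qy}$, whose dimension is exactly $d$ --- contradiction. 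With (INC) in hand, every prime $Q\supsetneq I$ satisfies $Q\cap R\supsetneq\qy$, hence $Q\cap R\in\Gy$ because $\qy\in\Gy^\ast$, which is precisely $(ii)$; this two-line finish is the paper's proof. The same repair applies verbatim to your ``self-contained variant'' over $S=R/\qy$, which bottoms out at the identical fibre statement for $S\subseteq\hat{S}$.
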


\begin{proof}\quad
$(i) \to (ii)\;\;N$ ist stark-koprim, also nach Beispiel 3 im letzten Abschnitt Ann$_{\hat{R}}(N) \in $ Spec$(\hat{R})$ und nat"urlich Ann$_{\hat{R}}(N) \cap R = $ Ann$_R(N) \notin \Gy.$

Zur Maximalit"at sei jetzt $P \in $ Spec$(\hat{R})$ mit Ann$_{\hat{R}}(N) \subset P$ und $P \cap R \notin \Gy$.
Dann ist $V=$ Ann$_E(P)$ als $R$-Modul nach (3.4) $\Gy$-teilbar, denn f"ur jeden Primdivisor $Q$ von Ann$_{\hat{R}}(V)$ gilt $P=Q$, also
$Q \cap R \notin \Gy$. In $V \subset N$ folgt nach Voraussetzung $V=N$, also Ann$_{\hat{R}}(N)=P.$

$(ii) \to (i)\;$ Wieder nach (3.4) ist $N$ als $R$-Modul $\Gy$-teilbar, denn f"ur jeden Primdivisor $P$ von Ann$_{\hat{R}}(N)$ gilt nach Voraussetzung $P=$ Ann$_{\hat{R}}(N),\;P \cap R \notin \Gy$.

$N$ ist sogar einfach-$\Gy$-teilbar, denn aus $0\neq V \subset N$ und $V\;\Gy$-teilbar folgt mit irgendeinem Primdivisor $Q_0$ von Ann$_{\hat{R}}(V)$, da"s nach (3.4) $Q_0 \cap R \notin \Gy$ ist, in $Q_0\supset $ Ann$_{\hat{R}}(V) \supset $ Ann$_{\hat{R}}(N)$ also nach Voraussetzung Gleichheit gilt, insbesondere $V=N$.

Erf"ulle jetzt $R \subset \hat{R}$ die Bedingung ''going up'':

$(ii) \to (iii)\;$ Nat"urlich ist Ann$_R(N) \in $ Spec$(R)\backslash \Gy.$
F"ur jedes Ann$_R(N) \subsetneq \qy \in $ Spec$(R)$ gibt es aber wegen ''going up'' ein Ann$_{\hat{R}}(N) \subsetneq Q$ mit $Q \cap R = \qy$ und es folgt $\qy \in \Gy$.

$(iii) \to (ii)\;$ Klar ist Ann$_{\hat{R}}(N) \cap R \notin \Gy.$
F"ur jedes Primideal Ann$_{\hat{R}}(N) \subsetneq Q$ folgt aber wegen (INC) Ann$_R(N)\subsetneq Q \cap R$, also $Q \cap R \in \Gy$.
\end{proof}

\begin{remark}\quad
Ohne Zusatzbedingungen an $R \subset \hat{R}$ sind die Punkte $(ii)$ und $(iii)$ voneinander unabh"angig, selbst bei $\Gy =\{\ay \subset R|\,\ay$ ist regul"ar$\}$:

F"ur $(ii) \nrightarrow (iii)$ sei $R$ ein sockelfreier Ring mit einem Primideal $P \subset \hat{R}$, so da"s $\dim(\hat{R} /P)=1,\;P \cap R$ nicht regul"ar und $P \cap R \notin $ Ass$(R)$ ist (siehe \cite[p.9]{12}).
Dann ist $N:= $ Ann$_E(P)$ einfach-teilbar, aber  Ann$_R(N) \notin \Gy^\ast$.
F"ur $(iii) \nrightarrow (ii)$ sei $R$ analytisch irreduzibel und $0\neq P \subset \hat{R}$ ein Primideal mit $P \cap R =0.$
Dann erf"ullt $N:= E$ die Bedingung $(iii)$, nicht aber $(ii)$.
\end{remark}

\section{Spezielle Gabriel-Topologien}
\setcounter{theorem}{0}

Die in \cite{12} betrachteten Gabriel-Topologien $\Gy_1=\{\ay \subset R\,|\,\ay $ ist regul"ar$\}$ und $\Gy_2=\{\ay \subset R\,| \,R/\ay$ ist artinsch$\}$ haben
folgende nat"urliche Verallgemeinerungen: F"ur jede multiplikative Teilmenge $S$ von $R$ sei $\Gy_S=\{\ay \subset R\,|\,\ay \cap S \neq \emptyset\}$, f"ur jede nat"urliche Zahl $n\geq 0$ sei $\Gy_D=\{R\} \,\cup\,\{\ay \subsetneq R\,|\,\dim(R/\ay) \leq n\}.$

\begin{lemma}
Ist $R$ ein Integrit"atsring, so gilt:
\begin{enumerate}
\item[(a)]
$\;\Gy_S\,=\,\Gy_1\;\Longleftrightarrow\; R_S $ ist ein K"orper.
\item[(b)]
$\;\Gy_D\,=\,\Gy_1\;\Longleftrightarrow\;\dim(R)=n+1.$
\end{enumerate}
\end{lemma}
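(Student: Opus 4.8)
The plan is to prove each of the two equivalences by double inclusion, exploiting the fact that over an integral domain an ideal is regular precisely when it is nonzero; hence $\Gy_1$ is simply the set of all nonzero ideals of $R$. In both (a) and (b) one of the two inclusions will turn out to be automatic, so the real content lies in characterizing when the reverse inclusion holds.

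For (a) I would first dispose of the edge case $0 \in S$ (then $R_S = 0$ is not a field, and $\Gy_S$ contains the zero ideal, so both sides of the asserted equivalence fail). Assuming $0 \notin S$, every $s \in S$ is nonzero, so any ideal meeting $S$ is nonzero, giving $\Gy_S \subseteq \Gy_1$ for free. It remains to decide when $\Gy_1 \subseteq \Gy_S$, i.e.\ when every nonzero ideal meets $S$; testing this on principal ideals shows it is equivalent to the condition that every nonzero $r \in R$ divides some element of $S$. I would then identify this with $R_S = K$, where $K = \mathrm{Quot}(R)$: viewing $R \subseteq R_S \subseteq K$, one has $1/r \in R_S$ iff $r$ divides an element of $S$, so $R_S = K$ iff every nonzero $r$ does so. Finally, since $R \subseteq R_S \subseteq K$ forces $\mathrm{Quot}(R_S) = K$, the ring $R_S$ is a field iff $R_S = K$. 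Chaining these equivalences yields (a).

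For (b) the inclusion $\Gy_D \subseteq \Gy_1$ can fail only through the zero ideal, and $0 \in \Gy_D$ iff $\dim(R/0) = \dim R \le n$; hence $\Gy_D \subseteq \Gy_1$ iff $\dim R \ge n+1$. The inclusion $\Gy_1 \subseteq \Gy_D$ says every nonzero proper ideal $\ay$ satisfies $\dim(R/\ay) \le n$, i.e.\ $\max\{\dim(R/\ay) \mid 0 \neq \ay \subsetneq R\} \le n$. So everything reduces to the key computation that this maximum equals $\dim R - 1$.

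To establish $\max\{\dim(R/\ay) \mid 0 \neq \ay \subsetneq R\} = \dim R - 1$ I would argue by two bounds. For the upper bound, choose a minimal prime $\py$ over $\ay$ realizing $\dim(R/\ay) = \dim(R/\py)$; since $\ay \neq 0$ and $R$ is a domain, $\py \neq 0$, so its height $h(\py) \ge 1$, and the general inequality $h(\py) + \dim(R/\py) \le \dim R$ gives $\dim(R/\ay) \le \dim R - 1$. For the lower bound, take a chain of primes $0 = \py_0 \subsetneq \py_1 \subsetneq \cdots \subsetneq \py_d = \my$ of length $d = \dim R$; then $\py_1$ is a nonzero proper ideal and the truncated chain witnesses $\dim(R/\py_1) \ge d-1$, while the upper bound forces equality. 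Hence $\Gy_1 \subseteq \Gy_D$ iff $\dim R \le n+1$, and together with the previous paragraph this gives $\Gy_D = \Gy_1$ iff $\dim R = n+1$. The main (though modest) obstacle is precisely this dimension computation — in particular producing the height-one prime $\py_1$ with $\dim(R/\py_1) = \dim R - 1$ — which is where the local Noetherian domain hypotheses genuinely enter.
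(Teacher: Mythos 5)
Your proof is correct. In part (a) it is essentially the paper's argument in different clothing: the paper likewise isolates $0 \notin S$ (deriving it from $0 \notin \Gy_1$ in one direction and from $R_S \neq 0$ in the other), gets $\Gy_S \subseteq \Gy_1$ for free, and encodes $\Gy_1 \subseteq \Gy_S$ in exactly your divisibility condition $\lambda r = s_0 \in S$; it merely verifies directly that every $0 \neq \frac{r}{1} \in R_S$ is invertible rather than routing through the identification $R_S = \mathrm{Quot}(R)$. In part (b) you take a genuinely different route at the key step. The paper tests the equality only on principal ideals and delegates the dimension bookkeeping to \cite[Corollary 11.18]{1}: from $\my \in \Gy_1 = \Gy_D$ it picks $0 \neq r_0 \in \my$ with $\dim(R/(r_0)) \leq n$ and concludes $\dim(R) \leq n+1$ — this uses the nontrivial half of Krull's principal ideal theorem, namely $\dim(R) \leq \dim(R/(r_0)) + 1$ for a non-zero-divisor $r_0$ — and conversely obtains $(r) \in \Gy_D$ for all $0 \neq r$ from the same equality. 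You instead prove the identity $\max\{\dim(R/\ay) \mid 0 \neq \ay \subsetneq R\} = \dim(R) - 1$ by pure chain manipulation: the upper bound from $h(\py) + \dim(R/\py) \leq \dim(R)$ with $h(\py) \geq 1$, the lower bound from a chain $0 = \py_0 \subsetneq \py_1 \subsetneq \cdots \subsetneq \py_d = \my$ of length $d = \dim(R)$, with $\py_1$ as witness. This buys a self-contained, entirely elementary argument that avoids the principal ideal theorem, at the cost of a little more dimension theory on the page; the paper's version is shorter because the hard inequality is cited. One pedantic point: when $R$ is a field your asserted identity reads as a maximum over the empty set, and the chain argument producing $\py_1$ needs $d \geq 1$; this is harmless, since for $\dim(R) = 0$ one has $\Gy_1 = \{R\} \subseteq \Gy_D$ and $\dim(R) = 0 \leq n+1$ simultaneously, so your biconditional for the inclusion $\Gy_1 \subseteq \Gy_D$ still holds — but a one-line remark disposing of $\dim(R) = 0$ first would make the ``key computation'' literally true as stated.
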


\pagebreak 
\begin{proof}
\begin{enumerate}
\item[(a)]
$''\Rightarrow\,''$ \quad
Aus $0\notin \Gy_1$ folgt $0 \notin \Gy_S$, d.h. $0 \notin S,\;R_S \neq 0.$
Jedes $0 \neq \frac{r}{1}\in R_S$ ist aber invertierbar, denn aus $r \neq 0$ folgt $(r) \in \Gy_1,\;(r) \in \Gy_S,\;\lambda r=s_0$ f"ur ein $\lambda \in R,\;s_0 \in S,$ also $\frac{r}{1}\cdot \frac{\lambda}{s_0}=1.$

$''\Leftarrow\,''$ \quad
Aus $\ay \in \Gy_S$, d.h. $0 \neq s_0 \in \ay$
(wegen $0 \notin S$) folgt $\ay \in \Gy_1.$
Aus $\ay \in \Gy_1$, d.h. $0 \neq r_0 \in \ay$ folgt $0 \neq \frac{r_0}{1} \in R_S$,
also nach Voraussetzung $\frac{r_0}{1}\cdot \frac{\lambda}{s_0}=1$ mit $\lambda \in R,\;s_0 \in S$, daraus $r_0\lambda =s_0,\;\ay \in \Gy_S.$
\item[(b)]
$''\Rightarrow\,''$\quad 
Aus $0 \notin \Gy_1,\;0 \notin \Gy_D$, folgt $\dim(R) \geq n+1$, und aus $\my \in \Gy_D,\;\my \in \Gy_1$ folgt
$0 \neq r_0 \in \my,\;(r_0) \in \Gy_1,\;(r_0) \in \Gy_D,\;\dim(R/(r_0))\leq n,$
also nach \cite[Corollary 11.18]{1} $\dim(R)\leq n+1.$

$''\Leftarrow\,''$\quad
$\dim(R) >n$ bedeutet $0 \notin \Gy_D$, f"ur alle $0\neq r \in R$ gilt aber
$\dim(R/(r))\leq n$, also $(r) \in \Gy_D.$
\end{enumerate}
\end{proof}

\begin{corollary}
\begin{enumerate}
\item[(a)]\quad 
$E$ einfach-$\Gy$-teilbar $\;\Longleftrightarrow\; E$ einfach-teilbar und $\Gy =\Gy_1.$
\item[(b)]
\quad $E$ einfach-$\Gy_S$-teilbar $\;\Longleftrightarrow\; E$ einfach-teilbar und $R_S$ ein K"orper.
\item[(c)]\quad
$E$ einfach-$\Gy_D$-teilbar $\;\Longleftrightarrow\; E$ einfach-teilbar und $\dim(R)=n+1$.
\end{enumerate}
\end{corollary}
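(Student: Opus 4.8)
The plan is to let part~(a) carry the whole statement and then to read off (b) and (c) from it by substituting the identity $\Gy = \Gy_1$ into Lemma~(4.1).

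For (a) I would apply Theorem~(3.5) to the submodule $N = E$ of $E$ itself. By Matlis duality $\mathrm{Hom}_R(E,E) \cong \hat{R}$, so $E$ is a faithful $\hat{R}$-module and $\mathrm{Ann}_{\hat{R}}(E) = 0$. Hence (3.5) reads: $E$ is einfach-$\Gy$-teilbar if and only if the zero ideal is a maximal element of $\{P \in \mathrm{Spec}(\hat{R})\,|\,P \cap R \notin \Gy\}$. For $0$ to lie in this set at all it must be a prime of $\hat{R}$, i.e.\ $\hat{R}$ must be a domain (equivalently, einfach-$\Gy$-teilbar $\Rightarrow$ stark-koprim and $E$ is kozyklisch, so Beispiel~3 of \S2 forces $\mathrm{Ann}_{\hat{R}}(E) = 0 \in \mathrm{Spec}(\hat{R})$). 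I would therefore extract from ``$E$ einfach-$\Gy$-teilbar'' the three facts: ($\alpha$) $\hat{R}$, hence $R$, is a domain; ($\beta$) $0 \notin \Gy$; and ($\gamma$) every nonzero prime $P$ of $\hat{R}$ satisfies $P \cap R \in \Gy$.

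These force $\Gy = \Gy_1$. In the domain $R$ one has $\Gy_1 = \{\ay\,|\,\ay \neq 0\}$, whose complement in the ideal lattice is $\{0\}$; so ($\beta$) gives $\Gy \subseteq \Gy_1$ at once. For $\Gy_1 \subseteq \Gy$ it suffices to place every nonzero prime $\qy$ of $R$ into $\Gy$: faithful flatness of $R \to \hat{R}$ makes $\mathrm{Spec}(\hat{R}) \to \mathrm{Spec}(R)$ surjective, so I would pick $P$ with $P \cap R = \qy$; then $P \neq 0$ because $\qy \neq 0$, and ($\gamma$) yields $\qy = P \cap R \in \Gy$. Since every prime above a nonzero ideal is nonzero, upward closure of $\Gy$ then gives $\Gy_1 \subseteq \Gy$, hence $\Gy = \Gy_1$. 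With $\Gy = \Gy_1$ the assumption literally reads ``$E$ einfach-teilbar'', which finishes the forward implication of (a); the converse is the tautology einfach-$\Gy_1$-teilbar $=$ einfach-teilbar.

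For (b) and (c) I would note that ``$E$ einfach-teilbar'' already supplies ($\alpha$), so $R$ is a domain and Lemma~(4.1) is applicable. Part~(a) rewrites ``$E$ einfach-$\Gy_S$-teilbar'' as ``$E$ einfach-teilbar and $\Gy_S = \Gy_1$'', and (4.1)(a) replaces $\Gy_S = \Gy_1$ by ``$R_S$ a field''; likewise ``$E$ einfach-$\Gy_D$-teilbar'' becomes ``$E$ einfach-teilbar and $\Gy_D = \Gy_1$'', and (4.1)(b) replaces $\Gy_D = \Gy_1$ by $\dim(R) = n+1$. The point needing care is the order of the argument: one must deduce that $R$ is a domain (from einfach-teilbar) before invoking (4.1), since that is the standing hypothesis there. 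The main obstacle is thus conceptual rather than computational, namely recognizing that $\mathrm{Ann}_{\hat{R}}(E) = 0$ turns (3.5) into a statement about the generic point of $\hat{R}$; the lying-over step and the description of $\Gy_1$ in a domain are then routine.
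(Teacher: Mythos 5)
Your proof is correct, and it follows the paper's overall architecture (prove (a), then reduce (b) and (c) to (a) via Lemma (4.1)), but the decisive forward step of (a) runs along a genuinely different path. The paper stays over $R$: from $E$ koprim it gets Ann$_R(E)=0$ prim, hence $R$ a domain, $0\notin\Gy$ and $\Gy\subset\Gy_1$; then, writing $E\cong R^\circ$, it applies the Matlis-duality Lemma (1.7)(b) to conclude that $R$ is einfach-$\Gy$-torsionsfrei, whence $0\in\Gy^\ast$ by (1.1)(c), and so $\Gy_1\subset\Gy$. You instead pass through the completion: Theorem (3.5), $(i)\leftrightarrow(ii)$, applied to $N=E$ with Ann$_{\hat{R}}(E)=0$, makes $0$ a maximal element of $\{P\in\mbox{Spec}(\hat{R})\,|\,P\cap R\notin\Gy\}$, and you descend to $R$ by lying over (faithful flatness of $R\to\hat{R}$). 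Both routes arrive at the same configuration --- $R$ a domain, $0\notin\Gy$, every nonzero prime of $R$ in $\Gy$, i.e. $0\in\Gy^\ast$ --- and both then tacitly use the same standard fact about Gabriel topologies over Noetherian rings to pass from primes to arbitrary ideals. On that point your phrase ``upward closure of $\Gy$'' is slightly too weak as stated: upward closure alone does not put a nonzero ideal $\ay$ into $\Gy$ just because all primes over $\ay$ lie in $\Gy$; one needs closure under products (the minimal primes $\py_1,\dots,\py_k$ over $\ay$ satisfy $(\py_1\cdots\py_k)^n\subset\ay$, so $\py_i\in\Gy$ forces $\ay\in\Gy$). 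Since the paper glosses the identical step (``also $\Gy_1\subset\Gy$'', and similarly in the proof of (1.2)), this is a matter of wording rather than a gap. Your detour buys something extra: it shows that $\hat{R}$ itself is a domain, i.e. $R$ is analytically irreducible (consistent with (2.4)), whereas the paper's argument is shorter, avoids lying over, and reuses only the duality lemmas of Section 1. Your handling of (b) and (c) --- first extracting ``$R$ domain'' from ``$E$ einfach-teilbar'' so that Lemma (4.1) is applicable, then substituting (4.1)(a) and (4.1)(b) into (a) --- makes explicit exactly the order of reasoning that the paper compresses into ``folgen jetzt unmittelbar aus dem Lemma''.
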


\begin{proof}
(a)$\; \;''\Leftarrow\,''$ ist klar, und bei $''\Rightarrow\,''$ ist $E$ koprim, also Ann$_R(E)=0$ ein Primideal, $R$ ein Integrit"atsring, $0\notin \Gy,\;\Gy \subset \Gy_1.$
Aus $E \cong R^\circ$ folgt nach (1.7) $R$ einfach-$\Gy$-torsionsfrei, $0 \in \Gy^\ast$ nach (1.1), also $\Gy_1 \subset \Gy.$\\
(b) und (c) folgen jetzt unmittelbar aus dem Lemma.
\end{proof}

F"ur die Gabriel-Topologie $\Gy_S$ l"a"st sich die "Aquivalenz $(i) \leftrightarrow (ii)$ in (3.5) vereinfachen:

\begin{theorem}
Die einfach-$\Gy_S$-teilbaren Untermoduln von $E$ entsprechen genau den maximalen Idealen des Ringes $\hat{R} \otimes_R R_S.$
\end{theorem}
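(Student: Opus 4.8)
The plan is to deduce this from (3.5) together with the elementary behaviour of prime ideals under localization; (3.5) already carries all of the module-theoretic content. First I would identify the ring. Since $R_S = S^{-1}R$ and $\hat{R}$ is an $R$-algebra, base change gives $\hat{R} \otimes_R R_S \cong S^{-1}\hat{R}$, the localization of $\hat{R}$ at the image of $S$ under the map $R \to \hat{R}$ (which is injective, as $R$ is noetherian local). So the maximal ideals in question are exactly those of $S^{-1}\hat{R}$.

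Second, I would invoke the standard order-preserving bijection between Spec$(S^{-1}\hat{R})$ and the set of primes of $\hat{R}$ disjoint from the image of $S$, and translate this into the language of (3.5). For $s \in S$ the image of $s$ lies in a prime $P$ of $\hat{R}$ precisely when $s \in P \cap R$, so the image of $S$ meets $P$ iff $(P \cap R) \cap S \neq \emptyset$, i.e. iff $P \cap R \in \Gy_S$. Hence the primes of $\hat{R}$ disjoint from $S$ are exactly the members of $\{P \in \mbox{Spec}(\hat{R}) \,|\, P \cap R \notin \Gy_S\}$, and since localization induces an order isomorphism of spectra, the maximal ideals of $S^{-1}\hat{R}$ correspond bijectively to the maximal elements of this set.

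Third, I would glue in Matlis duality. The assignment $N \mapsto$ Ann$_{\hat{R}}(N)$ is a bijection from the nonzero submodules of $E$ onto the proper ideals of $\hat{R}$, with inverse $I \mapsto$ Ann$_E(I)$. By (3.5), a submodule $0 \neq N \subset E$ is einfach-$\Gy_S$-teilbar iff Ann$_{\hat{R}}(N)$ is a maximal element of $\{P \in \mbox{Spec}(\hat{R}) \,|\, P \cap R \notin \Gy_S\}$; by the second step these maximal elements are precisely the maximal ideals of $\hat{R} \otimes_R R_S$. Composing the two bijections yields the asserted correspondence between the einfach-$\Gy_S$-teilbaren submodules of $E$ and the maximal ideals of $\hat{R} \otimes_R R_S$.

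Since (3.5) supplies the substance, the remaining points are routine verifications: the isomorphism $\hat{R} \otimes_R R_S \cong S^{-1}\hat{R}$, the identification of $P \cap R$ with the contraction of $P$ along $R \to \hat{R}$, and the fact that an order isomorphism of prime spectra carries ``maximal among primes disjoint from $S$'' to a genuine ``maximal ideal''. The one place I would tread carefully is exactly this last maximality bookkeeping together with the $R \to \hat{R}$ contraction; once those are pinned down, the statement drops straight out of (3.5).
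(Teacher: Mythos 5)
Your proposal is correct and takes essentially the same route as the paper: both identify $\hat{R} \otimes_R R_S$ with the localization $\hat{R}_T$ at the image $T=\varphi(S)$ of $S$ in $\hat{R}$, observe that $P \cap R \notin \Gy_S$ means precisely $P \cap T = \emptyset$, and then read the correspondence off from (3.5). The details you flag as needing care (the annihilator bijection from Matlis duality and the order isomorphism of spectra under localization) are exactly the routine steps the paper leaves implicit.
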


\begin{proof}
Mit der Einbettung $\varphi: R \to \hat{R}$ und $T:= \varphi(S)$ ist die Abbildung $\hat{R} \otimes_R R_S \to \hat{R}_T,\; u \otimes \frac{1}{s} \mapsto u/\varphi(s)$ ein Ringisomorphismus.
Nach (3.5) entsprechen nun die einfach-$\Gy_S$-teilbaren Untermoduln von $E$ genau den Primidealen $P$ von $\hat{R}$, die maximal bez"uglich der Eigenschaft $P \cap R \notin \Gy_S$, d.h. $P \cap T = \emptyset$ sind,
also den maximalen Idealen des Ringes $\hat{R}_T$.
\end{proof}

\section{"Uber die Ideale des Quotientenringes $R_\Gy$}
\setcounter{theorem}{0}

Ist die Gabriel-Topologie $\Gy$ perfekt, so zeigen wir in diesem letzten Abschnitt, da"s die Elemente von $\Gy^\ast$ genau den maximalen Idealen des kommutativen Ringes $R_\Gy$ entsprechen.
Wie in \cite[p.231]{9} hei"se $\Gy$ {\bf perfekt}, wenn $R_\Gy$ als $R$-Modul $\Gy$-teilbar ist.
Aus $\ay \cdot R_\Gy =R_\Gy$ folgt dann umgekehrt $\ay \in \Gy$,
und die kanonische Abbildung $\psi: R \to R_\Gy$ ist ein flacher Ringepimorphismus.

{\bf Beispiel}\quad
Ist $S$ eine multiplikative Teilmenge von $R$ und $R_S=\{\frac{r}{s}\,|\,r \in R,\,s \in S\},$
so ist nach \cite[p.238]{9} $\Gy =\{ \ay \subset R\,|\,\ay \cap S \neq \emptyset\}$ eine perfekte Gabriel-Topologie und $R_\Gy$ ringisomorph zu $R_S$.

\begin{lemma}
Ist $\Gy$ perfekt, so gilt:
\begin{enumerate}
\item[(a)]
F"ur jedes Ideal $\ay$ von $R$ ist $\underline{\psi^{-1}(\ay \cdot R_\Gy)\,=\,\ay^c}$,\\
wobei $\ay^c$ die S"attigung von $\ay$ bez"uglich $\Gy$ sei, d.h. $\ay^c/\ay=T_\Gy(R/\ay)$.
\item[(b)]
F"ur jedes Ideal $A$ von $R_\Gy$ ist $\underline{\psi^{-1}(A) \cdot R_\Gy\,=\,A}.$
\end{enumerate}
\end{lemma}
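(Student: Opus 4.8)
The plan is to handle the two parts separately, in each case reducing everything to the two faces of perfectness recorded just before the statement: that $\ay \in \Gy$ is equivalent to $\ay\cdot R_\Gy = R_\Gy$, and that $\psi\colon R \to R_\Gy$ is a flat ring epimorphism whose kernel is $T_\Gy(R)$ and whose cokernel $R_\Gy/\psi(R)$ is $\Gy$-torsion. I will also use perfectness in the form that the localization functor is naturally isomorphic to $R_\Gy \otimes_R {-}$.

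For part (a) the cleanest route is base change. By right-exactness of the tensor product, $R_\Gy \otimes_R (R/\ay) \cong R_\Gy/\ay R_\Gy$, and by perfectness $R_\Gy \otimes_R (R/\ay) \cong (R/\ay)_\Gy$. Under this identification the composite $R \stackrel{\psi}{\to} R_\Gy \twoheadrightarrow R_\Gy/\ay R_\Gy$ becomes the canonical map $R \twoheadrightarrow R/\ay \to (R/\ay)_\Gy$, since $\psi(r)\otimes 1 = 1 \otimes \bar r$. Hence $\psi^{-1}(\ay R_\Gy)$, being the kernel of $R \to R_\Gy/\ay R_\Gy$, is the preimage in $R$ of $\ker\!\big(R/\ay \to (R/\ay)_\Gy\big) = T_\Gy(R/\ay) = \ay^c/\ay$, which is exactly $\ay^c$. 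The only point to verify carefully is the standard fact that the canonical map $M \to M_\Gy$ has kernel $T_\Gy(M)$, applied to $M = R/\ay$. If one prefers to avoid the localization functor, the inclusion $\ay^c \subseteq \psi^{-1}(\ay R_\Gy)$ is immediate from $\by r \subseteq \ay$ with $\by \in \Gy$ and $\by R_\Gy = R_\Gy$, while the reverse inclusion can be obtained by writing $\psi(r) = \sum a_i u_i$, clearing denominators with a common $\by \in \Gy$ (using that $R_\Gy/\psi(R)$ is $\Gy$-torsion), and then invoking that the $\Gy$-torsion-free quotient $(R/\ay)/T_\Gy(R/\ay)$ has no nonzero element killed by an ideal in $\Gy$.

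For part (b) set $\ay := \psi^{-1}(A)$. The inclusion $\ay R_\Gy \subseteq A$ is immediate, since $\psi(\ay) \subseteq A$ and $A$ is an ideal of $R_\Gy$. For the reverse inclusion take $u \in A$. Because $R_\Gy/\psi(R)$ is $\Gy$-torsion, there is a $\by \in \Gy$ with $\psi(b)u \in \psi(R)$ for all $b \in \by$, say $\psi(b)u = \psi(r_b)$. Since $u \in A$ and $A$ is an $R_\Gy$-ideal, $\psi(r_b) = \psi(b)u \in A$, hence $r_b \in \ay$ and $\psi(b)u \in \psi(\ay) \subseteq \ay R_\Gy$; thus $\psi(\by)u \subseteq \ay R_\Gy$. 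Finally perfectness gives $\psi(\by)R_\Gy = \by R_\Gy = R_\Gy$, so writing $1 = \sum \psi(b_i)v_i$ with $b_i \in \by$, $v_i \in R_\Gy$ yields $u = \sum v_i\,(\psi(b_i)u) \in \ay R_\Gy$. Therefore $A \subseteq \ay R_\Gy$, and $A = \psi^{-1}(A)\cdot R_\Gy$.

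The substantive step is the reverse inclusion in (b): it rests on combining the two aspects of perfectness, namely that every element of $R_\Gy$ lands in $\psi(R)$ after multiplication by some $\by \in \Gy$ (torsion cokernel) and that such a $\by$ already generates the unit ideal of $R_\Gy$. In part (a) the only delicate ingredient is the natural isomorphism $R_\Gy \otimes_R (R/\ay) \cong (R/\ay)_\Gy$ furnished by perfectness; once that is available the identification of the kernel is purely formal, so I expect (b) to be where the real work lies.
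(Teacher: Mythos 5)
Your proof is correct; part (a) is essentially the paper's own argument, while part (b) takes a genuinely different route. In (a) the paper works with the commutative square relating $\psi$, $\psi'\colon R/\ay \to (R/\ay)_\Gy$ and the localized projection $\nu_\Gy\colon R_\Gy \to (R/\ay)_\Gy$, using perfectness exactly where you do, namely for the identification $(R/\ay)_\Gy \cong R_\Gy/\ay R_\Gy$ (in the paper: $\mathrm{Ke}(\nu_\Gy)=\ay\cdot R_\Gy$); your tensor formulation $R_\Gy \otimes_R (R/\ay) \cong (R/\ay)_\Gy$ is the same argument in different clothing, and both reduce to the standard fact $\mathrm{Ke}(M \to M_\Gy) = T_\Gy(M)$. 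In (b), by contrast, the paper cites Stenstr\"om (p.~227): since $\psi$ is a flat ring epimorphism, for $x \in A$ there exist $r_1,\dots,r_n \in R$ and $y_1,\dots,y_n \in R_\Gy$ with $1=\sum_i \psi(r_i)y_i$ and $x\psi(r_i) \in \mathrm{Bi}(\psi)$ for all $i$, from which $x \in \psi^{-1}(A)\cdot R_\Gy$ follows just as in your final computation. You instead manufacture exactly such a decomposition by hand: you choose $\by \in \Gy$ with $\psi(\by)u \subseteq \psi(R)$ (torsion cokernel of $\psi$) and write $1=\sum_i \psi(b_i)v_i$ using $\by\cdot R_\Gy = R_\Gy$ (perfectness); taking $r_i = b_i$ this is precisely the instance of Stenstr\"om's criterion that the paper needs, so your argument amounts to a self-contained proof of that instance rather than a citation. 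What your route buys is independence from the general theory of flat epimorphisms, at the modest cost of invoking that $R_\Gy/\psi(R)$ is $\Gy$-torsion; note only that this fact holds for \emph{every} Gabriel topology, being built into the quotient-module construction, and is not, as your opening sentence suggests, one of the faces of perfectness --- a harmless misattribution that does not affect the validity of the proof.
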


\begin{proof}
\begin{enumerate}
\item[(a)]\quad
Im kommutativen Diagramm

\vspace{0.5cm}

$$R\quad\stackrel{\psi}{\longrightarrow}\;\; R_\Gy$$
$$\hspace{0.2cm}\nu\;\downarrow\;\;\qquad\qquad \downarrow\;\nu_\Gy$$
$$R/\ay \;\;\stackrel{\psi'}{\longrightarrow} \;\; (R/\ay)_\Gy$$

\vspace{0.5cm}

ist Ke$(\psi') =T_\Gy(R/\ay)=\ay^c/\ay$ und, weil $\Gy$ perfekt ist, Ke$(\nu_\Gy)=\ay\cdot R_\Gy,$
also 
$$\psi^{-1}(\ay \cdot R_\Gy)\;=\;\mbox{Ke}(\nu_\Gy\circ \psi)\;=\;\mbox{Ke}(\psi' \circ \nu)\;=\;\ay^c.$$

\item[(b)]
\quad $''\subset\,''$ ist klar, denn aus $r \in \psi^{-1}(A)$ und $x \in R_\Gy$ folgt $r \cdot x = \psi(r)\,x \in A.$\\
F"ur $''\supset\,''$ sei $x \in A$: Weil $R \stackrel{\psi}{\longrightarrow} R_\Gy$ ein flacher Ringepimorphismus ist,
gibt es nach \cite[p.227]{9} Elemente $r_1,...,r_n \in R$ und $y_1,...,y_n \in R_\Gy$ mit
$$1\;=\; \sum_i \psi(r_i)\;y_i\quad\mbox{ und } \quad x\,\psi(r_i)\;\in \;\mbox{Bi}(\psi)\quad \mbox{ f"ur alle } i,$$
so da"s mit $x\,\psi(r_i)=\psi(t_i)\;$ (alle $t_i \in R$) folgt $ x=\sum\limits_i t_i \cdot y_i,\;t_i \in \psi^{-1}(R_\Gy x)\subset \psi^{-1}(A)$, also
$x \in \psi^{-1}(A) \cdot R_\Gy.$
\end{enumerate}
\end{proof}

\begin{theorem}
Ist $\Gy$ perfekt, so gilt:
\begin{enumerate}
\item[(a)]
Die Ideale von $R_\Gy$ entsprechen genau den $\Gy$-reinen Idealen von $R$.
\item[(b)]
Die Primideale von $R_\Gy$ entsprechen genau den $\Gy$-reinen Primidealen von $R$.
\item[(c)]
Die maximalen Ideale von $R_\Gy$ entsprechen genau den Elementen von $\Gy^\ast$.
\end{enumerate}
\end{theorem}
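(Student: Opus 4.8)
The plan is to base all three parts on the pair of maps furnished by (5.1): the contraction $A \mapsto \psi^{-1}(A)$ from ideals of $R_\Gy$ to ideals of $R$, and the extension $\ay \mapsto \ay\cdot R_\Gy$ in the other direction, recalling that an ideal is $\Gy$-rein precisely when $\ay = \ay^c$, i.e. when $R/\ay$ is $\Gy$-torsionsfrei. First I would settle (a). Every contracted ideal is $\Gy$-rein, since by (5.1a) and (5.1b)
$$\big(\psi^{-1}(A)\big)^c \;=\; \psi^{-1}\big(\psi^{-1}(A)\cdot R_\Gy\big)\;=\;\psi^{-1}(A).$$
That the two maps are mutually inverse is then read off directly from (5.1): extension followed by contraction sends $\ay$ to $\ay^c$, which equals $\ay$ exactly for $\Gy$-reine $\ay$, while contraction followed by extension is the identity on ideals of $R_\Gy$ by (5.1b). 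Both maps obviously preserve inclusions, so in fact I obtain an order isomorphism between the lattice of ideals of $R_\Gy$ and the lattice of $\Gy$-reine ideals of $R$ --- a feature I will exploit in (c).

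For (b) one inclusion is formal: as $\psi$ is a ring homomorphism, $\psi^{-1}(A)$ is prime whenever $A$ is, and it is $\Gy$-rein by the displayed computation. For the converse I must check that $\ay\cdot R_\Gy$ is prime when $\ay = \py$ is a $\Gy$-reines prime. Here I would specialise the commutative square from the proof of (5.1a) to $\ay = \py$: perfectness gives that $\nu_\Gy$ has kernel $\py\cdot R_\Gy$, $\Gy$-Reinheit gives $T_\Gy(R/\py)=0$, and exactness of localisation at a perfect topology makes $\nu_\Gy$ surjective, so that
$$R_\Gy\big/\py\cdot R_\Gy \;\cong\; (R/\py)_\Gy.$$
The right-hand side is a Gabriel localisation of the $\Gy$-torsionsfreie domain $R/\py$ and therefore embeds as a subring into its quotient field $\kappa(\py)$; being a subring of a field it is a domain, whence $\py\cdot R_\Gy$ is prime. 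Thus the order isomorphism of (a) restricts to a bijection between the prime ideals of $R_\Gy$ and the $\Gy$-reine prime ideals of $R$.

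Part (c) I would then deduce purely order-theoretically. A prime $\py$ is $\Gy$-rein iff $R/\py$ is $\Gy$-torsionsfrei, i.e. iff $\mbox{Ass}(R/\py)=\{\py\}$ avoids $\Gy$, i.e. iff $\py \notin \Gy$; hence the $\Gy$-reine primes are exactly the members of $\mbox{Spec}(R)\backslash\Gy$, whose inclusion-maximal elements form $\Gy^\ast$ by definition. On the other side, in any commutative ring the maximal ideals are precisely the inclusion-maximal prime ideals. Since (b) is the restriction of the order isomorphism of (a), it is an order isomorphism $\mbox{Spec}(R_\Gy)\to\mbox{Spec}(R)\backslash\Gy$ and so matches inclusion-maximal elements on both sides; that is, the maximal ideals of $R_\Gy$ with $\Gy^\ast$, which is precisely (c).

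The only genuinely non-formal point is the primeness of $\py\cdot R_\Gy$ in (b), and within it the claim that $(R/\py)_\Gy$ is a domain; I expect this to be the main obstacle. I would secure it by the subring-of-$\kappa(\py)$ argument above; alternatively one can avoid the domain property altogether by invoking that $\psi$ is a flat ring epimorphism, for which the induced map $\mbox{Spec}(R_\Gy)\to\mbox{Spec}(R)$ is a homeomorphism onto a subspace of $\mbox{Spec}(R)$, so that primes pull back to and push forward from primes automatically.
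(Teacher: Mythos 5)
Your proposal is correct, and while its skeleton (the contraction--extension pair from (5.1)) necessarily coincides with the paper's, the two substantive steps are done differently. In (a) you obtain the $\Gy$-Reinheit of every contraction purely formally from (5.1), via $(\psi^{-1}(A))^c=\psi^{-1}\bigl(\psi^{-1}(A)\cdot R_\Gy\bigr)=\psi^{-1}(A)$, whereas the paper quotes \cite[Proposition 17.1]{3} that $R_\Gy/A$ is $\Gy$-torsionsfrei and passes to the submodule $R/\psi^{-1}(A)$; your version is slicker and saves the citation, provided you record the one-line fact (valid since $R$ is noethersch and $\Gy$ a filter) that $T_\Gy(R/\ay)=0$ is equivalent to $\mathrm{Ass}(R/\ay)\cap\Gy=\emptyset$, so that $\Gy$-rein really is $\ay=\ay^c$. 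In (b) the paper proves primeness of $\py\cdot R_\Gy$ elementarily, testing with ideals as in \cite[Lemma 4]{6}: from $AB\subset\py\cdot R_\Gy$ it gets $\psi^{-1}(A)\,\psi^{-1}(B)\subset\py$, say $\psi^{-1}(A)\subset\py$, hence $A\subset\py\cdot R_\Gy$ by (5.1 b). You instead use exactness of the localization functor for perfect $\Gy$ to identify $R_\Gy/\py R_\Gy\cong(R/\py)_\Gy$ and embed this as a subring of $\kappa(\py)$; this is correct --- note that $\ay\in\Gy$ implies $\ay\nsubseteq\py$ because $\py\notin\Gy$, so the induced topology on $R/\py$ consists of nonzero ideals and the quotient-field embedding works --- but the exactness and surjectivity of $\nu_\Gy$ deserve a citation (e.g.\ \cite[Chapter XI.3]{9}), and your fallback via the flat-epimorphism spectral homeomorphism is too quick as stated: that $\mathrm{Spec}(R_\Gy)\to\mathrm{Spec}(R)$ is a homeomorphism onto a subspace does not by itself tell you that a given $\Gy$-reines prime lies in the image, which is exactly the point at issue. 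In (c) the paper argues directly, using perfectness once more (for maximal $P$, $\psi^{-1}(P)\subsetneq\qy$ forces $\qy\cdot R_\Gy=R_\Gy$, hence $\qy\in\Gy$), while you deduce (c) order-theoretically: the bijection of (b) is an order isomorphism between $\mathrm{Spec}(R_\Gy)$ and the $\Gy$-reine primes, which are exactly $\mathrm{Spec}(R)\backslash\Gy$, whose maximal elements are $\Gy^\ast$ by definition; since the maximal ideals of a commutative ring are the inclusion-maximal primes, maximal elements match. That is clean and valid, and even avoids a second appeal to the implication $\ay\cdot R_\Gy=R_\Gy\Rightarrow\ay\in\Gy$. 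Net comparison: the paper's route is self-contained given (5.1); yours leans on standard perfect-localization machinery but buys the extra structural fact $R_\Gy/\py R_\Gy\hookrightarrow\kappa(\py)$, nicely tying this section back to the role of $\kappa(\py)$ in Abschnitt 1.
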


\begin{proof}
\begin{enumerate}
\item[(a)]
Ein Untermodul $U$ von $M$ hei"st $\Gy${\bf-rein} in $M$, wenn $M/U \;\Gy$-torsionsfrei ist.
In unserem Fall ist $R_\Gy/A$, also auch $R/\psi^{-1}(A)\;\Gy$-torsionsfrei (siehe \cite[Proposition 17.1]{3}), d.h. $\psi^{-1}(A)\;\Gy$-rein in $R$, und mit dem Lemma folgt alles.
\item[(b)]
F"ur jedes Primideal $P \subset R_\Gy$ ist $\psi^{-1}(P)$ ein Primideal in $R$ und wie eben $\psi^{-1}(P)\;\Gy$-rein in $R$,
nach dem Lemma auch $\psi^{-1}(P)\cdot R_\Gy=P$.
Ist umgekehrt $\py$ ein $\Gy$-reines Primideal in $R$, folgt wieder $\psi^{-1}(\py \cdot R_\Gy)=\py$,
und es bleibt zu zeigen, da"s $\py \cdot R_\Gy$ ein Primideal in $R_\Gy$ ist (siehe \cite[Lemma 4]{6}):
$A,B$ Ideale in $R_\Gy$ mit $AB \subset \py\cdot  R_\Gy\,\Rightarrow\,\psi^{-1}(A) \psi^{-1}(B) \subset \py$, etwa $\psi^{-1}(A) \subset \py$, also wie oben $A 
\subset \py \cdot R_\Gy.$

\item[(c)]
F"ur jedes maximale Ideal $P \subset R_\Gy$ ist $\psi^{-1}(P) \in $ Spec$(R)\backslash \Gy$,
und aus $\psi^{-1}(P) \subsetneq \qy$ folgt $P \subsetneq \qy \cdot R_\Gy$ (denn
$P=\qy \cdot R_\Gy$ lieferte den Widerspruch $\psi^{-1}(P)=\qy^c$),
also $\qy \cdot R_\Gy =R_\Gy,\;\qy \in \Gy.$
Ist umgekehrt $\py \in \Gy^\ast$, wird $\py \cdot R_\Gy$ nach (b) ein Primideal in $R_\Gy$,
ja sogar ein maximales Ideal:
Aus $\py \cdot R_\Gy \subsetneq A$ folgt nach (5.1 b) $\py \subsetneq \psi^{-1}(A)$, also $\psi^{-1}(A) \in \Gy,\;A = R_\Gy$.
\end{enumerate}
\end{proof}

\begin{remark}\quad
Ohne Perfektheit kann $|\,$Max$(R_\Gy)|=1$ sein und $\Gy^\ast$ sogar unendlich. Ist z.B. $R$ ein regul"arer Ring mit $\dim(R)\geq 2$,
so ist $\Gy:=\{R\} \cup \{\ay \subsetneq R\,|\,h(\ay)\geq 2\}$ eine Gabriel-Topologie und
$\Gy^\ast=\{\py \in $ Spec$(R)\,|h(\py)=1\}$ unendlich, w"ahrend $R \cong R_\Gy$ ist (siehe \cite[Lemma 3]{6}).
\end{remark}


\end{document}